\newtheorem{theorem}{Theorem}[section]
\newtheorem{lemma}[theorem]{Lemma}
\newtheorem{corollary}[theorem]{Corollary}
\newtheorem{proposition}[theorem]{Proposition}
\newtheorem{problem}[theorem]{Problem}
\newtheorem{question}[theorem]{Question}
\tikzstyle{noeud}=[circle,inner sep=2, minimum size =3 pt, line width = 1pt, draw=black, fill=white]
\newcommand{\proof}{\noindent{\bf Proof.\ }}
\newcommand{\qed}{\hfill $\square$ \bigskip}
\newcommand{\Dom}{{Dominator }}
\newcommand{\St}{{Staller }}
\def\cp{\,\square\,}
\newcommand{\tdom}{{\rm tdom}}
\newcommand{\cD}{{\cal D}}
\let\oldenumerate\enumerate
\renewcommand{\enumerate}{
  \oldenumerate
  \setlength{\itemsep}{0.5pt}
  \setlength{\parskip}{0pt}
  \setlength{\parsep}{0pt}
}
\begin{document}

\title{Maker-Breaker total domination game}

\author{
Valentin Gledel $^{a}$
\and
Michael A.\ Henning $^{b}$
\and
Vesna Ir\v si\v c $^{c,d}$
\and
Sandi Klav\v zar $^{c,d,e}$
}

\date{\today}

\maketitle
\begin{center}
$^a$ Univ Lyon, Universit\'e Lyon 1, LIRIS UMR CNRS 5205, F-69621, Lyon, France \\
{\tt valentin.gledel@univ-lyon1.fr}
\medskip

$^b$ Department of Pure and Applied Mathematics, University of Johannesburg,
Auckland Park 2006, South Africa \\
{\tt mahenning@uj.ac.za}
\medskip

$^c$ Faculty of Mathematics and Physics, University of Ljubljana, Slovenia\\
{\tt vesna.irsic@fmf.uni-lj.si}\\
{\tt sandi.klavzar@fmf.uni-lj.si}\\
\medskip

$^d$ Institute of Mathematics, Physics and Mechanics, Ljubljana, Slovenia\\
\medskip

$^e$ Faculty of Natural Sciences and Mathematics, University of Maribor, Slovenia
\end{center}

\begin{abstract}
Maker-Breaker total domination game in graphs is introduced as a natural counterpart to the Maker-Breaker domination game recently studied by Duch\^{e}ne, Gledel, Parreau, and Renault. Both games are instances of the combinatorial Maker-Breaker games. The Maker-Breaker total domination game is played on a graph $G$ by two players who alternately take turns choosing vertices of $G$. The first player, Dominator, selects a vertex in order to totally dominate $G$ while the other player, Staller, forbids a vertex to Dominator in order to prevent him to reach his goal. 

It is shown that there are infinitely many connected cubic graphs in which Staller wins and that no minimum degree condition is sufficient to guarantee that Dominator wins when Staller starts the game. An amalgamation lemma is established and used to determine the outcome of the game played on grids. Cacti are also classified with respect to the outcome of the game. A connection between the game and hypergraphs is established. It is proved that the game is PSPACE-complete on split and bipartite graphs. Several problems and questions are also posed.
\end{abstract}

\noindent {\bf Key words:} Maker-Breaker domination game; Maker-Breaker total domination game; Cartesian product of graphs; hypergraph; cactus; PSPACE-complete

\medskip\noindent
{\bf AMS Subj.\ Class:} 05C57, 05C69, 68Q25, 91A43

\section{Introduction}
\label{sec:intro}

The Maker-Breaker domination game ({\em MBD game} for short) was studied for the first time in~\cite{gledel-2018+}. The game is played on a graph $G$ by two players. To be consistent with the naming from the usual and well-investigated domination game~\cite{brklra-2010} (see also~\cite{bujtas-2015, dorbec-2015, heki-2016, kinnersley-2013, NSS2016, XLK-2018}), the players are named Dominator and Staller. They are selecting vertices alternatively, always selecting a vertex that has not yet been chosen. Dominator wins the MBD game on $G$ if at some point the set of vertices already selected by him forms a {\em dominating set} of $G$, that is, a set $D$ such that every vertex not in $D$ has a neighbor in $D$. Otherwise Staller wins, that is, she wins if she is able to select all the vertices from the closed neighborhood of some vertex.

Just as the total domination game~\cite{henning-2015} (see also~\cite{bresar-2017, bujtas-henning-tuza-2016, combinatorica-2017, hkr-2017+}) followed the domination game, we introduce here the Maker-Breaker total domination game ({\em MBTD game} for short). The rules of the  MBTD game are much the same as those of the MBD game, except that Dominator wins on $G$ if he can select a {\em total dominating set} of $G$, that is, a set $D$ such that every vertex of $G$ has a neighbor in $D$, and Staller wins if she can select all the vertices from the open neighborhood of some vertex.

The MBTD game can be, just as the MBD game, seen as a particular instance of the Maker-Breaker game introduced in 1973 by Erd\H{o}s and Selfridge~\cite{erdos-1973}. The game is played on a hypergraph $H$. One of the player, Maker, wins if he is able to select all the vertices of one of the hyperedges of $H$, while the other player, Breaker, wins if she is able to keep Maker from doing so. There is an abundant literature on this topic, see the books of Beck~\cite{beck-2008} and of Hefetz et al.~\cite{hefetz-2014} for related surveys. The MBTD game played on a graph $G$ can be interpreted as a Maker-Breaker game, where the hypergraph $H$ has the same vertices as $G$ and the hyperedges are the open neighborhoods of the vertices of $G$. Then Maker becomes \St and Breaker is renamed to \Dom. For more links between hypergraphs and the MBTD game see recent studies of the (total) domination game on hypergraphs~\cite{bujtas-2018, bujtas-henning-tuza-2016}.

Suppose the MBTD game is played on $G$. Then we say that the game is a {\em D-game} if Dominator is the first to play and it is an {\em S-game} otherwise. Whenever we say that the MBTD game is played on $G$, we mean that either the D-game or the S-game is played. If the D-game is played on a graph, then the sequence of moves of the two players will be denoted $d_1, s_1, d_2, s_2, \ldots$ Similarly, when the S-game is played, the sequence of moves will be denoted $s_1', d_1', s_2', d_2', \ldots$

We say that Staller {\em isolates a vertex} $u$ of $G$ during the MBTD game played on $G$ if she plays all of the neighbors of $u$ during the game. If so, then Staller wins the game on $G$.  A graph $G$ is
\begin{itemize}
\item ${\cal D}$, if Dominator wins the MBTD game;
\item ${\cal S}$, if Staller wins the MBTD game; and
\item ${\cal N}$, if the first player wins,
\end{itemize}
where it is assumed that both players are playing optimally.
We consider the empty graph to be a ${\cal D}$ graph because every vertex of it is dominated after zero moves have been played. Note also that $K_1$ is an ${\cal S}$ graph. The notations ${\cal D}$, ${\cal S}$, and ${\cal N}$ come directly from the article of Duch\^{e}ne et al.~\cite{gledel-2018+}, but are in turn derived from classical notations from combinatorial game theory (see~\cite{siegel-2013}).

We proceed as follows. In Section~\ref{sec:basic}, we derive basic properties of the game, and establish key lemmas that will be useful in subsequent chapters. We show in Section~\ref{sec:min-degree} that there exist ${\cal S}$ graphs with arbitrarily large minimal degree and that there are infinitely many examples of connected cubic graphs in which \St wins the S-game. An amalgamation lemma is established in Section~\ref{sec:grids} and applied to grid graphs to determine the outcome of the MBTD game. The concept of ${\cal D}$-minimal graphs is also discussed, in particular prisms over odd cycles are proved to be ${\cal D}$-minimal. Results on the MBTD game for cacti are presented in Section~\ref{sec:cacti}. Complexity results are discussed in Section~\ref{S:complexity} where we prove that deciding the outcome of the MBTD position is PSPACE-complete on split and bipartite graphs.  We close in Section~\ref{sec:concluding} with open problems and questions.

\section{Basic properties of the game}
\label{sec:basic}

The proof of the following result is parallel to the proof of~\cite[Proposition 2]{gledel-2018+}. The argument was given there only for completeness because it actually follows from the more general result \cite[Proposition 2.1.6]{hefetz-2014} dealing with arbitrary Maker-Breaker games on hypergraphs. We hence do not repeat the argument here.

\begin{lemma} {\rm (No-Skip Lemma)}
\label{lem:no-skip}
In an optimal strategy of Dominator (resp.\ Staller) in the MBTD game it is never an advantage for him (resp.\ for her) to skip a move.
\end{lemma}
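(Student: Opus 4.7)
The plan is to use the classical strategy-stealing, or extra-move, argument that is standard for monotone Maker-Breaker games. First observe, as already pointed out in the introduction, that the MBTD game on $G$ is a Maker-Breaker game on the hypergraph $H_G$ with vertex set $V(G)$ and hyperedges $\{N_G(v):v\in V(G)\}$, with Staller playing the role of Maker and Dominator playing the role of Breaker. The argument rests on monotonicity of the two winning conditions: if a player wins from a position with claimed set $A$, then the same player also wins from any position in which the opponent has claimed the same vertices and he or she has claimed a superset of $A$.

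For Dominator, suppose he has an optimal strategy $\sigma$ that, at some juncture when it is his turn, prescribes skipping. I would construct a non-skipping strategy $\sigma^\star$ that does at least as well by having Dominator maintain a phantom copy of the game in his head. In the phantom game Dominator follows $\sigma$, while in the real game, whenever $\sigma$ instructs him to skip, Dominator instead claims an arbitrary still-unclaimed vertex and adds it to a set $B$ of bonus vertices. If $\sigma$ later instructs him to play a vertex $y$ that already lies in $B$, he removes $y$ from $B$, plays any other unclaimed vertex, and adds that new vertex to $B$. Staller's moves in the real game are fed verbatim into the phantom game. A routine induction on the number of rounds played shows the invariant $D_{\mathrm{real}}=D_{\mathrm{phantom}}\cup B$ and $S_{\mathrm{real}}=S_{\mathrm{phantom}}$, with $|B|$ equal to the number of skips performed so far. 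In particular, every move dictated by $\sigma$ and every move played by Staller is legal in both games. Since $D_{\mathrm{real}}\supseteq D_{\mathrm{phantom}}$, any total dominating set produced by $\sigma$ in the phantom game is contained in $D_{\mathrm{real}}$, so $\sigma^\star$ wins whenever $\sigma$ does.

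The argument for Staller is exactly symmetric, with Staller's winning condition (containing some open neighborhood $N_G(v)$ in her claimed set) playing the role of Dominator's total-domination condition; an extra vertex in her set can only bring her closer to isolating some vertex. The main delicate point, as for Dominator, is the bookkeeping that keeps the phantom simulation consistent after each exchange of a bonus vertex, which is precisely what the invariant above tracks. This is the specialization of the general no-skip lemma for Maker-Breaker games on hypergraphs to the particular hypergraph $H_G$, which is why the authors do not repeat the argument.
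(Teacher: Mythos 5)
Your proof is correct and is precisely the standard extra-move/strategy-stealing argument for Maker--Breaker games that the paper invokes by reference (to the parallel proof in \cite{gledel-2018+} and to the general Proposition~2.1.6 of \cite{hefetz-2014}) rather than writing out. The phantom-game bookkeeping with the bonus set $B$ and the invariant $D_{\mathrm{real}}=D_{\mathrm{phantom}}\cup B$, $S_{\mathrm{real}}=S_{\mathrm{phantom}}$ is exactly what that general argument does, so you have simply supplied the details the authors chose to omit.
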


No-Skip Lemma implies the following useful facts.

\begin{corollary}
\label{cor:win-in-the-other-game}
Let $G$ be a graph. 
\begin{itemize}
\item[(i)] If Dominator wins the S-game on $G$, then he also wins the D-game. If Staller wins the D-game, then she also wins the S-game.
\vspace*{-2mm}
\item[(ii)] Let $V_1, \ldots, V_k$ a partition of $V(G)$ such that $V_i$, $i\in [k] := \{1,\ldots, k\}$, induces a ${\cal D}$ graph, then $G$ is a ${\cal D}$ graph.
\vspace*{-2mm}
\end{itemize}
\end{corollary}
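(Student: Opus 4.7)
The plan for part (i) is to use the No-Skip Lemma as a strategy-stealing device. Since every MBTD play has a winner, the two halves of (i) are contrapositive to one another, so it suffices to prove that a Dominator S-game win lifts to a Dominator D-game win. Given a winning S-game strategy $\sigma$ for Dominator, I would have him play an arbitrary first move $d_1$ in the D-game and then consult $\sigma$ as if the play had just begun with Staller's upcoming move. Whenever $\sigma$ prescribes a vertex already claimed in the real game (necessarily $d_1$ or a previous ``bonus'' vertex, because in the virtual S-game $\sigma$ only picks unplayed vertices), Dominator instead plays any other available vertex; by No-Skip such extra claims cannot hurt him. A short induction shows that Dominator's real claimed set always contains the virtual one, so eventually it contains the total dominating set guaranteed by $\sigma$.

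For part (ii), I plan to use a component/pairing strategy. For each $i$ the hypothesis together with (i) gives Dominator a winning S-game strategy $\sigma_i$ on $G[V_i]$. In the S-game on $G$ I have Dominator play according to the rule: whenever Staller plays a vertex in $V_i$, respond in $V_i$ using $\sigma_i$. Because the $V_i$ are pairwise disjoint and Dominator plays in $V_i$ only as a response to Staller's $V_i$-moves, the restricted play on each $V_i$ is a legitimate S-game on $G[V_i]$ in which $\sigma_i$'s prescribed responses are always available. If on some turn $\sigma_i$ has already completed (Dominator has claimed a total dominating set of $G[V_i]$), he plays any still-available vertex, and by No-Skip such a bonus move cannot harm any other $\sigma_j$. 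At the end Dominator's claimed set contains a total dominating set of every $G[V_i]$, and hence totally dominates $G$; by (i) this shows $G$ is a ${\cal D}$ graph.

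The main, and really only, technical point in either part is to make the ``extra claims are harmless'' intuition precise: one runs a virtual game in parallel with the real one and verifies by induction that the player's real claimed set always contains the virtual claimed set. Once this bookkeeping is in place, the remaining verification is routine.
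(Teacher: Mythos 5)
Your argument is correct and is exactly the intended one: the paper offers no written proof beyond the remark that the corollary follows from the No-Skip Lemma, and your part (i) is the standard ``an extra move never hurts'' imagination argument while your part (ii) is the pairing strategy across the parts $V_i$ (with No-Skip absorbing the bonus moves), which is precisely how the cited lemma is meant to be applied. No gaps; the determinacy remark justifying the contrapositive reading of (i) and the closing appeal to (i) at the end of (ii) are both sound.
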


Another link between the two games is expressed by the following lemma.

\begin{lemma}
\label{lem_MBD-versus-MBTD}
If Staller wins an MBD game on a graph $G$, then Staller also wins an MBTD game on $G$. Equivalently, if Dominator wins an MBTD game on $G$, then Dominator also wins an MBD game on $G$.
\end{lemma}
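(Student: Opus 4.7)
The plan is to transfer Staller's winning strategy from the MBD game directly to the MBTD game, exploiting the observation that for every vertex $u$ the open neighborhood $N_G(u)$ is contained in the closed neighborhood $N_G[u]$. In hypergraph language, the hyperedges of the MBTD game (open neighborhoods) are subsets of the hyperedges of the MBD game (closed neighborhoods), so Staller's job as Maker is only easier in the MBTD game. The ``equivalently'' part of the statement is just the contrapositive and so will follow automatically.

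Concretely, I would fix the starting player and let $\sigma$ be a winning strategy for Staller in the MBD game on $G$. Since the set of legal moves in both games is the same (any previously unchosen vertex of $G$), $\sigma$ is also a valid strategy in the MBTD game, and I would have Staller use it there. For any strategy Dominator employs in the MBTD game, the resulting sequence of moves is also a legal play of the MBD game. By the assumed property of $\sigma$, during this play Staller eventually claims every vertex of $N_G[u]$ for some $u\in V(G)$. Since $N_G(u)\subseteq N_G[u]$, she has at that moment also claimed every vertex of $N_G(u)$, and hence she has won the MBTD game.

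The only point requiring care is to exclude the possibility that Dominator wins the MBTD game first, before Staller's closed-neighborhood hyperedge is completed. The main step of this check is the immediate observation that any total dominating set is also a dominating set: if Dominator were to assemble a total dominating set at some point of play, the same set would witness his winning the MBD game at the same moment, contradicting the fact that $\sigma$ is a Staller-winning strategy there. Hence Dominator cannot preempt Staller's win, and the transferred strategy succeeds. I do not expect any genuine obstacle beyond this bookkeeping; the lemma is essentially a hypergraph-monotonicity observation combined with the No-Skip framework already in place.
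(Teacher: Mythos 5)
Your proposal is correct and is in substance the same argument as the paper's: both rest on the single monotonicity observation that a total dominating set is a dominating set (equivalently, $N_G(u)\subseteq N_G[u]$ for every vertex $u$), so a winning strategy transfers unchanged between the two games. The paper phrases the transfer from Dominator's side and you phrase it from Staller's (hyperedge-containment) side, but these are just the two directions of the ``equivalently'' already built into the statement, and your extra check that Dominator cannot ``win first'' is harmless bookkeeping since the two winning conditions are mutually exclusive.
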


\proof
A total dominating set is a dominating set. Therefore, if \Dom is able to select the vertices of a total dominating set of a graph $G$, then by applying the same strategy he is able to select the vertices of a dominating set of $G$. Likewise, if \St is able to keep \Dom from selecting a dominating set for $G$, then by applying the same strategy she can keep \Dom from selecting the vertices of a total dominating set.
\qed

Using Lemma~\ref{lem_MBD-versus-MBTD} we can thus apply earlier results on the MBD game, where \St has a winning strategy, to the the MBDT game.

\begin{proposition}
\label{prp:cycles}
The cycle $C_3$ is ${\cal N}$, the cycle $C_4$ is ${\cal D}$, and the cycles $C_n$, $n\ge 5$, are ${\cal S}$.
\end{proposition}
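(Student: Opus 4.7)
The plan is a case analysis on $n$.

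For $C_3$, I would argue both games directly. In the D-game, after Dominator opens with some vertex $v$ and Staller takes one of the other two, Dominator claims the remaining vertex; a direct check confirms that any pair of vertices of $C_3$ forms a TDS, so Dominator wins. In the S-game, after Staller opens with $v_1$ and Dominator replies with some $v_2$, Staller finishes with the third vertex $v_3$, and then $N(v_2) = \{v_1, v_3\}$ lies entirely in Staller's set, so $v_2$ is isolated. This yields $C_3 \in {\cal N}$.

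For $C_4$ with vertices $0, 1, 2, 3$ taken cyclically, the key observation I would lean on is that every edge is a TDS of size $2$ (easily verified via the open neighborhoods). In the D-game, Dominator plays $0$; since $0$ has two neighbors and Staller can block only one of them before Dominator's second move, Dominator completes an edge with $0$. In the S-game, Dominator replies to Staller's opening move $u$ with the antipodal vertex $u+2$, and on his second move he can still claim one of the two neighbors of $u+2$, both of which are distinct from the single vertex Staller has played so far. Either way the final Dominator-set contains an edge, giving $C_4 \in {\cal D}$.

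For $C_n$ with $n \ge 5$, I would first invoke Corollary~\ref{cor:win-in-the-other-game}(i) to reduce the problem to showing Staller wins the D-game. Label the vertices cyclically as $0, 1, \ldots, n-1$ and suppose Dominator opens with $0$. Staller's plan is to play $1$, setting up a double threat: if Dominator does not respond with $n-1$, then Staller plays $n-1$ on her next turn and $N(0) = \{1, n-1\}$ is entirely hers, isolating $0$; if Dominator does respond with $n-1$, then Staller plays $3$, after which $N(2) = \{1, 3\}$ is entirely hers, isolating $2$. The hypothesis $n \ge 5$ is exactly what is needed to make $0, 1, 3, n-1$ four distinct vertices.

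The main obstacle will be the double-threat verification for $n \ge 5$, specifically checking that Dominator has no single second move that simultaneously defends vertex $0$ (which requires him to occupy $n-1$, the unique remaining neighbor of $0$) and vertex $2$ (which, once $1$ is Staller's, requires him to occupy $3$). Once this observation is in place, the three cases combine to give the stated classification.
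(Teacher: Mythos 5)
Your proof is correct and follows essentially the same route as the paper: reduce to one game via Corollary~\ref{cor:win-in-the-other-game}(i), use the antipodal reply on $C_4$, and set up a double threat on $C_n$ for $n\ge 5$. The only real difference is cosmetic: you use the single Staller move adjacent to Dominator's first vertex uniformly for all $n\ge 5$, whereas the paper splits into $n=5$ and $n\ge 6$ with two different first moves, so your version is marginally cleaner (just note that at the time of Dominator's second move in the $C_4$ S-game Staller has already played two vertices, not one; the argument still goes through since her second move blocks at most one neighbor of the antipode).
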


\proof
Let $v_1, \ldots, v_n$ be the vertices of $C_n$ in the natural order. The assertion for $C_3$ is clear. Consider the S-game on $C_4$ and assume without loss of generality that Staller played $v_1$ as the first vertex. Then Dominator replies with $v_3$ and wins the game in his next move. Hence $C_4$ is ${\cal D}$ by Corollary~\ref{cor:win-in-the-other-game}(i).

Let $n\ge 5$ and consider the D-game. We may again assume without loss of generality that Dominator played $v_1$ as the first vertex. Suppose first that $n = 5$. Then
Staller replies with the vertex $v_2$ which forces Dominator to play $v_5$. But then the move $v_4$ of Staller is her winning move. Assume second that $n\ge 6$. Then Staller plays the vertex $v_4$ which enables her to win after the next move by playing either $v_2$ or $v_6$. Hence $C_n$, $n\ge 5$, is ${\cal S}$ by Corollary~\ref{cor:win-in-the-other-game}(i).
\qed

Let $G$ and $H$ be disjoint graphs and consider the MBTD game played on the disjoint union of $G$ and $H$. In Table~\ref{table:table_union}, all possible outcomes of the game are presented. Table~\ref{table:table_union} is identical to the corresponding table from~\cite{gledel-2018+} for the MBD game. The entries ${\cal S}$ from our table follow from their table by applying Lemma~\ref{lem_MBD-versus-MBTD}. For the other three entries the arguments are parallel to those from~\cite{gledel-2018+} and are skipped here.

\begin{table}[ht!]
\begin{center}
\begin{tabular}{|c||
c|c|c|}
 \hline
\diagbox{$G$}{$H$}  & ${\cal D}$ & ${\cal N}$ & ${\cal S}$ \\
 \hline
 \hline
 ${\cal D}$ & ${\cal D}$ & ${\cal N}$  & ${\cal S}$ \\ \hline
 ${\cal N}$ & ${\cal N}$ & ${\cal S}$ & ${\cal S}$ \\ \hline
 ${\cal S}$ & ${\cal S}$ & ${\cal S}$ & ${\cal S}$ \\ \hline
 \end{tabular}
\end{center}
\caption{Outcomes of the MBTD game played on the disjoint union of $G$ and $H$}
\label{table:table_union}
\end{table}

If $u$ is a vertex of a graph $G$ and $H$ is an arbitrary graph disjoint from $G$, then let $G_u[H]$ be the graph constructed from $G$ by replacing the vertex $u$ with $H$ and joining with an edge every vertex of $H$ to every vertex from $N_G(u)$. Note that $G_u[K_1] = G$, where $u$ is an arbitrary vertex of $G$, and that $(K_2)_w[\overline{K}_k] = K_{1,k}$, where $w$ is an arbitrary vertex of $K_2$.

\begin{lemma} {\rm (Blow-Up Lemma)}
\label{lem:blow-up}
Let $u$ be a vertex of a graph $G$ and let $H$ be a graph. If $G$ is ${\cal D}$, then also $G_u[H]$ is ${\cal D}$.
\end{lemma}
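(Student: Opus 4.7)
The plan is to have Dominator shadow his winning strategy on $G$ while he plays on $G_u[H]$. Fix Dominator's winning strategy $\sigma$ on $G$ (for whichever of the D-game or S-game is under consideration on $G_u[H]$). I would set up a parallel virtual game on $G$ using the following dictionary. Any move by either player on a vertex $v \in V(G) \setminus \{u\}$ counts as a virtual move on that same $v$. Staller's first move in $V(H)$ counts virtually as a move on $u$; every later move she makes in $V(H)$ is treated as a virtual skip, which by the No-Skip Lemma can only hurt her. Whenever $\sigma$ directs Dominator to play $u$ virtually, he plays an arbitrary still-available vertex $h^* \in V(H)$ in reality; whenever $\sigma$ plays a vertex of $V(G) \setminus \{u\}$, Dominator plays that vertex directly.

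The verification proceeds in three short steps. First, the strategy is well defined: $\sigma$ can only play $u$ while $u$ is virtually available, hence before Staller has made any move in $V(H)$, hence while $V(H)$ is entirely unplayed, so the needed $h^*$ exists. Second, once $\sigma$ finishes the virtual game with a total dominating set $D$ of $G$, I check that Dominator's set $R$ of real moves is a total dominating set of $G_u[H]$; the argument splits on whether $u \in D$, and uses in both cases that $D$ totally dominates $u$ in $G$ (so $D \cap N_G(u) \neq \emptyset$), together with the fact that every vertex of $V(H)$ is adjacent in $G_u[H]$ to every vertex of $N_G(u)$. Third, I need to show Staller cannot isolate any vertex of $G_u[H]$ before $R$ is in place; if she did isolate some $v \in V(G_u[H])$, then translating her real moves to the virtual game shows she would isolate either $v$ itself (when $v \in V(G) \setminus \{u\}$) or $u$ (when $v \in V(H)$) in $G$, contradicting the choice of $\sigma$.

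The main obstacle is the structural mismatch between the single vertex $u$ in $G$ and the possibly many vertices of $V(H)$ in $G_u[H]$: Staller can make several moves inside $V(H)$, but the virtual game has only one $u$ to play. The device of mapping her first $V(H)$-move to the virtual $u$ and all later $V(H)$-moves to virtual skips, licensed by the No-Skip Lemma, is the key trick that makes the simulation go through.
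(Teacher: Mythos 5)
Your proposal is correct and follows essentially the same imagination-strategy argument as the paper's proof, including the key device of translating Staller's moves inside $V(H)$ into a single virtual move on $u$ plus skips (justified by the No-Skip Lemma) and the case split on whether $u$ lies in the virtual total dominating set. The one point to tighten is your dictionary: Staller's first move in $V(H)$ can count as a virtual move on $u$ only if $u$ is still available in the virtual game (Dominator may already have played $u$ virtually, having answered with some vertex of $H$ in reality), and in that case her move must also be treated as a virtual skip, exactly as the paper stipulates.
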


\proof
Suppose that $G\in {\cal D}$ and let the MBTD game be played on $G_u[H]$. Then the strategy of Dominator is the following. He imagines a game is played on $G$, where he is playing optimally. Each move of Staller played in the game on $G_u[H]$ is copied by Dominator to the imagined game on $G$, provided the move is legal in $G$. Dominator then replies with his optimal move and copies it to the real game played on $G_u[H]$.  More precisely, whenever his optimal move in $G$ is a vertex $w\ne u$, he also plays $w$ in $G_u[H]$, while if $u$ is his optimal move in the game played on $G$, then in the game on $G_u[H]$ he plays an arbitrary vertex of $H$. Suppose now that a move of Staller played in $G_u[H]$ is not legal in the game on $G$. This happens when Staller plays a vertex of $H$. If $u$ is a legal move of Staller in $G$, then Dominator imagines that she has played $u$ in $G$. Otherwise, he imagines that Staller has skipped her move in the imagined game played on $G$. In both cases Dominator then replies with his optimal move. In any case, having in mind the No-Skip Lemma, Dominator wins the imagined game played on $G$.

Suppose that $D$ is the total dominating set selected by Dominator when the game played on $G$ is finished. If $u\notin D$, then $D$ is also the set of vertices selected by Dominator in the real game. Moreover, $D$ is also a total dominating set of $G_u[H]$. Indeed, a vertex $w\in D$ which (totally) dominates $u$ in $G$, (totally) dominates every vertex of $H$ in $G_u[H]$. Suppose next that $u\in D$ and let $x$ be the vertex of $H$ selected by Dominator in $G_u[H]$, when he selected $u$ is the imagined game. Hence in the real game Dominator selected the set $D' = D\cup \{x\}\setminus \{u\}$. Since $D'$ is a total dominating set of $G_u[H]$, Dominator wins the real game also in this case.
\qed

To see that the converse of the Blow-Up Lemma~\ref{lem:blow-up} does not hold, consider $(C_5)_u[C_4]$, where $u$ is an arbitrary vertex of $C_5$. From Proposition~\ref{prp:cycles} we know that $C_5$ is ${\cal S}$ (and so not ${\cal D}$). On the other hand it can be easily verified that $(C_5)_u[C_4]$ is ${\cal D}$.

The lexicographic product $G[H]$ of graphs $G$ and $H$ has vertex set $V(G) \times V(H)$, where vertices $(g,h)$ and $(g',h')$ are adjacent if $gg' \in E(G)$, or if $g=g'$ and $hh' \in E(H)$. Iteratively applying the Blow-Up Lemma to all the vertices of $G$, we get the following consequence:

\begin{corollary}
If $G\in {\cal D}$ and $H$ is a graph, then $G[H]\in {\cal D}$.
\end{corollary}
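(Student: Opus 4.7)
The plan is to realize the lexicographic product $G[H]$ as the iterated application of the blow-up construction $u \mapsto G_u[H]$ over all vertices of $G$, and then to apply Lemma~\ref{lem:blow-up} once per vertex. I would enumerate $V(G) = \{v_1, \ldots, v_n\}$ and define $G^{(0)} = G$ together with $G^{(i)} = G^{(i-1)}_{v_i}[H]$ for $i \in [n]$; note that $v_i$ still belongs to $G^{(i-1)}$, so the recursion makes sense. Since $G^{(0)} \in {\cal D}$ by hypothesis and each blow-up step preserves membership in ${\cal D}$ by the Blow-Up Lemma, a trivial induction on $i$ yields $G^{(n)} \in {\cal D}$.

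It then remains to verify the combinatorial identification $G^{(n)} \cong G[H]$. I would prove by induction on $i$ that $V(G^{(i)})$ equals $(V(H) \times \{v_1, \ldots, v_i\}) \cup \{v_{i+1}, \ldots, v_n\}$, with adjacencies as follows: inside each copy $V(H) \times \{v_j\}$ with $j \le i$ the edges are those of $H$; two vertices in distinct copies $V(H) \times \{v_j\}$ and $V(H) \times \{v_k\}$ are adjacent iff $v_j v_k \in E(G)$; a vertex of $V(H) \times \{v_j\}$ is adjacent to $v_k$ (with $k > i$) iff $v_j v_k \in E(G)$; and two surviving vertices $v_j, v_k$ with $j, k > i$ are adjacent iff $v_j v_k \in E(G)$. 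For the inductive step, replacing $v_i$ in $G^{(i-1)}$ by a fresh copy of $H$ adds the edges of $H$ inside the new copy together with all edges from it to $N_{G^{(i-1)}}(v_i)$, which by the inductive hypothesis consists of precisely the copies $V(H) \times \{v_j\}$ with $v_j v_i \in E(G)$ and $j < i$, together with the surviving vertices $v_k$ with $v_k v_i \in E(G)$ and $k > i$. Setting $i = n$ produces exactly the lexicographic product $G[H]$.

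No substantive obstacle is expected: the only care needed is the bookkeeping of adjacencies through the iterated blow-ups, while all of the game-theoretic content is already packaged into the Blow-Up Lemma.
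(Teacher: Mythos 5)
Your proposal is correct and follows exactly the paper's route: the paper proves this corollary precisely by iteratively applying the Blow-Up Lemma to all the vertices of $G$, and your argument simply makes explicit the (routine) verification that the iterated blow-up is isomorphic to the lexicographic product $G[H]$.
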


\section{Graphs from ${\cal S}$ and ${\cal N}$ with large minimum degree}
\label{sec:min-degree}

In this section we consider the effect of the minimum degree of a graph to the outcome of the MBTD game. An intuition says that when the minimum degree is large, then Dominator has good chances to win the game. We will demonstrate here, however, that this is not true in general.

The following construction shows that there exist ${\cal S}$ graphs with arbitrarily large minimal degree. Let $G_{n, k}$, $k\ge 1$, $n \geq 2k$, be the graph with the vertex set $[n] \cup \binom{[n]}{k}$ and edges between $i \in [n]$ and $S \in \binom{[n]}{k}$ if and only if $i \in S$. In particular, $G_{2,1} = 2K_2$. The graph $G_{n,k}$ is bipartite with vertices of degrees $k$ and $\binom{n-1}{k-1}$, hence $\delta(G_{n, k}) = k$. Note that $\gamma_t(G_{n, k}) = \lceil \frac{n}{k} \rceil + n - k + 1$. Since $n\ge 2k$, Staller can play some $k$ vertices $i_1, \ldots, i_k \in [n]$ in the D-game on $G_{n, k}$ before either Dominator wins the game or there are no more legal moves in $[n]$. Hence Staller wins the D-game on $G_{n, k}$ as she isolates the vertex $\{i_1, \ldots, i_k\} \in \binom{[n]}{k}$. As she wins the D-game, she also wins the S-game. Thus $G_{n, k}$ is ${\cal S}$.

The \emph{total domatic number} of a graph $G$, denoted by $\tdom(G)$ and first defined by Cockayne Dawes, and Hedetniemi~\cite{CDH80total}, is the maximum number of total dominating sets into which the vertex set of $G$ can be partitioned. The parameter $\tdom(G)$ is equivalent to the maximum number of colors in a (not necessarily proper) coloring of the vertices of a graph where every color appears in every open neighborhood. Chen, Kim, Tait, and Verstraete~\cite{CKTV15coupon} called this the \emph{coupon coloring problem}. This parameter is now well studied. We refer the reader to Chapter~13 in the book~\cite{MHAYbookTD} on total domination in graphs for a brief survey of results on the  total domatic number, and to~\cite{GoHe18} for a recent paper on this topic.

\begin{theorem}
\label{t:tdom}
If $G$ is a graph with $\tdom(G) = 1$, then the S-game is won by Staller.
\end{theorem}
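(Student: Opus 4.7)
The plan is to argue by contradiction using a strategy-duplication (``parallel games'') argument. Suppose for contradiction that \Dom has a winning strategy $\sigma$ in the S-game on $G$. By Corollary~\ref{cor:win-in-the-other-game}(i), he then also has a winning strategy $\sigma_D$ in the D-game on $G$. I will now play both games simultaneously on $G$ and extract from them a partition of $V(G)$ into two total dominating sets, contradicting $\tdom(G)=1$.

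Concretely, consider a real D-game $\Gamma$ on $G$ in which \Dom follows $\sigma_D$, and an imagined S-game $\Gamma'$ on $G$ whose sequence of moves coincides, vertex for vertex, with that of $\Gamma$ but with the two roles interchanged: the $j$-th \Dom-move $d_j$ of $\Gamma$ is declared to be the $j$-th \St-move of $\Gamma'$, and the $j$-th \St-move $s_j$ of $\Gamma$ is declared to be the $j$-th \Dom-move of $\Gamma'$. \St's strategy in $\Gamma$ is then defined by this identification: after observing $d_j$, she feeds $d_1,\ldots,d_j$ into $\sigma$ as the list of \St-moves in $\Gamma'$ and plays $s_j:=\sigma(d_1,\ldots,d_j)$ in the real game. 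A simple induction on $j$ shows that the played-vertex sets of $\Gamma$ and of $\Gamma'$ remain equal after every move, so $s_j$ is always a legal move in $\Gamma$ and \St's strategy is well-defined.

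Run both games to exhaustion of $V(G)$, extending $\sigma_D$ and $\sigma$ arbitrarily past any winning position (harmless, since a total dominating set stays total dominating when enlarged). The resulting partition $V(G)=D\sqcup S$ then satisfies the following. In the real game $\Gamma$, $\sigma_D$ is winning for \Dom, so $D$ is a total dominating set of $G$. In the imagined game $\Gamma'$, the set of \Dom-moves is precisely $S$, and $\sigma$ is winning for \Dom there, so $S$ is also a total dominating set of $G$. This exhibits a partition of $V(G)$ into two total dominating sets, contradicting $\tdom(G)=1$; hence \Dom cannot win the S-game, so \St does. The main technical point to nail down is the synchronization claim above: one must check by induction on $j$ that the played sets in $\Gamma$ and $\Gamma'$ always coincide, so that the vertex prescribed by $\sigma$ in the imagined game is still available in the real one.
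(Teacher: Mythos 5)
Your proof is correct and is essentially the paper's argument in contrapositive form: the paper has Staller directly adopt Dominator's D-game winning strategy in the S-game, so that her moves form a total dominating set $D$ while Dominator's moves lie in $V(G)\setminus D$, which cannot be a total dominating set since $\tdom(G)=1$. Your parallel-games contradiction produces exactly the same partition of $V(G)$ into two total dominating sets, and the synchronization claim you flag is the same routine role-swap bookkeeping that the paper leaves implicit.
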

\proof If the D-game played in $G$ is won by Staller, then Staller also wins the S-game. Hence we may assume that the D-game played in $G$ is won by Dominator, for otherwise the desired result is immediate. Thus, \Dom has a winning strategy to construct a total dominating set whatever sequence of moves \St plays in order to prevent him from doing so. In this case, in the S-game with \St first to play, she applies exactly Dominator's winning strategy in the D-game played in $G$ in order to create a total dominating set, say $D$, in $G$ comprising of the vertices she plays in the course of the game. All Dominator's moves are played from the set $V(G) \setminus D$.  By assumption, $\tdom(G) = 1$, implying that the set $V(G) \setminus D$ is not a total dominating set of $G$. Thus, the vertices played by \Dom do not form a total dominating set of $G$, implying that the S-game is won by Staller.~\qed

\medskip
We observe that every tree $G$ satisfies $\tdom(G) = 1$, and so by Theorem~\ref{t:tdom}, the S-game is won by \St in the class of trees. As observed in~\cite{DeHaHe17}, there are infinitely many examples of connected cubic graphs $G$ with $\tdom(G) = 1$. The Heawood graph, $G_{14}$, shown in Figure~\ref{f:Heawood}(a) is one such example of a cubic graph that does not have two disjoint total dominating sets; that is, $\tdom(G_{14}) = 1$. Zelinka~\cite{Ze89} was the first who observed that that  the graphs $G_{n,k}$, $n\ge 2k-1$, from the beginning of the section satisfy $\tdom(G_{n,k}) = 1$ (and have arbitrarily large minimum degree). We summarize these results formally as follows.

\begin{corollary}
The following holds. \vspace*{-2mm}
\begin{enumerate}
\item There are infinitely many examples of connected cubic graphs in which \St wins the S-game.
\item No minimum degree condition is sufficient to guarantee that \Dom wins the S-game.
\end{enumerate}
\end{corollary}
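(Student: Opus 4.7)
The statement is essentially a corollary that packages together Theorem~\ref{t:tdom} with the two families of graphs already discussed in the section, so my plan is to invoke Theorem~\ref{t:tdom} twice, once for each family, rather than build any new machinery.

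For part~(a), my plan is to exhibit an infinite family of connected cubic graphs with $\tdom = 1$. The excerpt already points to the Heawood graph $G_{14}$ as one such example and cites~\cite{DeHaHe17} for the existence of infinitely many connected cubic graphs with total domatic number equal to $1$. I would simply name this infinite family $\{H_i\}_{i \ge 1}$, note that each $H_i$ is connected and cubic with $\tdom(H_i) = 1$, and apply Theorem~\ref{t:tdom} to conclude that \St wins the S-game on every $H_i$.

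For part~(b), I would use the family $G_{n,k}$ introduced at the start of the section. Recall that $G_{n,k}$ is bipartite with $\delta(G_{n,k}) = k$, so letting $k \to \infty$ shows that minimum degree can be made arbitrarily large. Zelinka's result~\cite{Ze89} gives $\tdom(G_{n,k}) = 1$ whenever $n \ge 2k-1$, hence for each fixed $k$ we may choose, say, $n = 2k$ and obtain a graph $G_{2k,k}$ with $\delta = k$ and $\tdom = 1$. Theorem~\ref{t:tdom} then says \St wins the S-game on $G_{2k,k}$, which shows that no lower bound on $\delta(G)$ can force \Dom to win the S-game.

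There is no real obstacle here: both claims reduce mechanically to the hypothesis of Theorem~\ref{t:tdom} once the two known families are recalled. The only small point to be careful about is consistency of indices for $G_{n,k}$ (the section text already verifies the degree and domatic parameters, so the plan is just to cite these observations verbatim rather than reprove them). The entire proof should therefore fit in a short paragraph that lists the two families, states their relevant parameters, and invokes Theorem~\ref{t:tdom}.
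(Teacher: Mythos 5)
Your proposal matches the paper's own justification essentially verbatim: the paper derives part~(a) by citing~\cite{DeHaHe17} for infinitely many connected cubic graphs with total domatic number~$1$ and applying Theorem~\ref{t:tdom}, and part~(b) from Zelinka's observation that $\tdom(G_{n,k}) = 1$ for $n \ge 2k-1$ together with $\delta(G_{n,k}) = k$ and the same theorem. No issues; your choice of $n = 2k$ is a harmless specialization.
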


Consider the graphs $G_{2k-1,k}$, $k\ge 2$. As observed earlier, $\tdom(G_{2k-1,k}) = 1$ and hence Staller wins the S-game. On the other hand, Dominator wins the D-game. The main idea of Dominator is to select $k$ vertices from the set $[2k-1]$ and two vertices among the other vertices, so that these $k+2$ vertices form a total dominating set. This goal can be easily achieved by following Staller's moves in the bipartition parts of $G_{2k-1,k}$.

We next demonstrate that the Petersen graph $P$ is ${\cal S}$. This example is in particular interesting because $\tdom(P) = 2$. In the S-game, due to the symmetries of the graph, $s'_1$ can be an arbitrary vertex, and Dominator's reply can be on a neighbor of $s'_1$ or on a vertex at distance $2$ from $s'_1$. After Dominator's first move, Staller has a strategy which forces Dominator's replies (otherwise she wins the game even sooner) and in both cases, the strategy ends with Staller isolating a vertex. The strategies are presented on Figure~\ref{fig:petersenS}, where in both cases, she wins by isolating one of the two black vertices.

\begin{figure}[!ht]
    \begin{center}
        \begin{tikzpicture}[scale=0.4]
        \tikzstyle{every node}=[circle, draw, fill=black!10,
        inner sep=0pt, minimum width=4pt]

        \begin{scope}
        \node (0) at (0*360/5:4 cm) {};
        \node (1) at (1*360/5:4 cm) {};
        \node (2) at (2*360/5:4 cm) {};
        \node (3) at (3*360/5:4 cm) {};
        \node (4) at (4*360/5:4 cm) {};
        \draw (0) -- (1) -- (2) -- (3) -- (4) -- (0);

        \node (5) at (0*360/5:2 cm) {};
        \node (6) at (1*360/5:2 cm) {};
        \node (7) at (2*360/5:2 cm) {};
        \node (8) at (3*360/5:2 cm) {};
        \node (9) at (4*360/5:2 cm) {};

        \draw (5) edge (7);
        \path (7) edge (9);
        \path (9) edge (6);
        \path (6) edge (8);
        \path (8) edge (5);

        \path (0) edge (5);
        \path (1) edge (6);
        \path (2) edge (7);
        \path (3) edge (8);
        \path (4) edge (9);

        \node[label={above:$s_1'$}] () at (1*360/5:4 cm) {};
        \node[label={left:$d_1'$}, fill=black] () at (2*360/5:4 cm) {};
        \node[label={below:$s_2'$}] () at (3*360/5:2 cm) {};
        \node[label={right:$d_2'$}] () at (4*360/5:2 cm) {};
        \node[label={above:$s_3'$}] () at (2*360/5:2 cm) {};
        \node[fill=black] () at (0*360/5:2 cm) {};
        \end{scope}

        \begin{scope}[xshift=12cm]
        \node (0) at (0*360/5:4 cm) {};
        \node (1) at (1*360/5:4 cm) {};
        \node (2) at (2*360/5:4 cm) {};
        \node (3) at (3*360/5:4 cm) {};
        \node (4) at (4*360/5:4 cm) {};
        \draw (0) -- (1) -- (2) -- (3) -- (4) -- (0);

        \node (5) at (0*360/5:2 cm) {};
        \node (6) at (1*360/5:2 cm) {};
        \node (7) at (2*360/5:2 cm) {};
        \node (8) at (3*360/5:2 cm) {};
        \node (9) at (4*360/5:2 cm) {};

        \draw (5) edge (7);
        \path (7) edge (9);
        \path (9) edge (6);
        \path (6) edge (8);
        \path (8) edge (5);

        \path (0) edge (5);
        \path (1) edge (6);
        \path (2) edge (7);
        \path (3) edge (8);
        \path (4) edge (9);

        \node[label={above:$s_1'$}] () at (1*360/5:4 cm) {};
        \node[label={left:$d_1'$}, fill=black] () at (3*360/5:4 cm) {};
        \node[label={below:$s_2'$}] () at (3*360/5:2 cm) {};
        \node[label={right:$d_2'$}] () at (4*360/5:2 cm) {};
        \node[label={below:$s_3'$}] () at (4*360/5:4 cm) {};
        \node[fill=black] () at (0*360/5:4 cm) {};
        \end{scope}

        \end{tikzpicture}
        \caption{The strategy of Staller in the S-game on $P$. }
        \label{fig:petersenS}
    \end{center}
\end{figure}
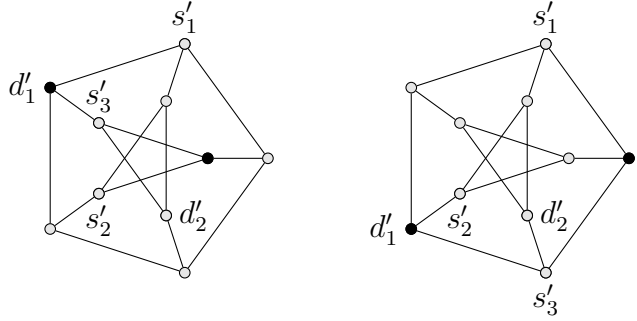

In the D-game, Staller's strategy is the following. She replies to $d_1$ on its neighbor. Up to symmetries there are only three possible replies $d_2$ of Dominator (at distances $1$ or $2$ from $d_1$, $s_1$). Staller's strategies in each of these cases are presented in Figure~\ref{fig:petersenD}  where she wins by isolating one of the two black vertices in each case. Hence, Staller also wins the D-game.

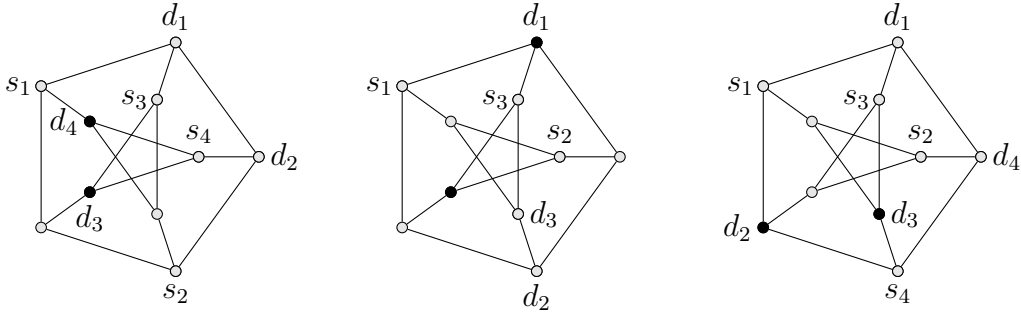
\begin{figure}[!ht]
	\begin{center}
		\begin{tikzpicture}[scale=0.4]
		\tikzstyle{every node}=[circle, draw, fill=black!10,
		inner sep=0pt, minimum width=4pt]
		
		\begin{scope}
		\node (0) at (0*360/5:4 cm) {};
		\node (1) at (1*360/5:4 cm) {};
		\node (2) at (2*360/5:4 cm) {};
		\node (3) at (3*360/5:4 cm) {};
		\node (4) at (4*360/5:4 cm) {};
		\draw (0) -- (1) -- (2) -- (3) -- (4) -- (0);
		
		\node (5) at (0*360/5:2 cm) {};
		\node (6) at (1*360/5:2 cm) {};
		\node (7) at (2*360/5:2 cm) {};
		\node (8) at (3*360/5:2 cm) {};
		\node (9) at (4*360/5:2 cm) {};
		
		\draw (5) edge (7);
		\path (7) edge (9);
		\path (9) edge (6);
		\path (6) edge (8);
		\path (8) edge (5);
		
		\path (0) edge (5);
		\path (1) edge (6);
		\path (2) edge (7);
		\path (3) edge (8);
		\path (4) edge (9);
		
		\node[label={above:$d_1$}] () at (1*360/5:4 cm) {};
		\node[label={left:$s_1$}] () at (2*360/5:4 cm) {};
		\node[label={below:$d_3$}, fill=black] () at (3*360/5:2 cm) {};
		\node[label={left:$s_3$}] () at (1*360/5:2 cm) {};
		\node[label={left:$d_4$}, fill=black] () at (2*360/5:2 cm) {};
		\node[label={above:$s_4$}] () at (5*360/5:2 cm) {};
		\node[label={right:$d_2$}] () at (0*360/5:4 cm) {};
		\node[label={below:$s_2$}] () at (4*360/5:4 cm) {};
		\end{scope}
		
		\begin{scope}[xshift=12cm]
		\node (0) at (0*360/5:4 cm) {};
		\node (1) at (1*360/5:4 cm) {};
		\node (2) at (2*360/5:4 cm) {};
		\node (3) at (3*360/5:4 cm) {};
		\node (4) at (4*360/5:4 cm) {};
		\draw (0) -- (1) -- (2) -- (3) -- (4) -- (0);
		
		\node (5) at (0*360/5:2 cm) {};
		\node (6) at (1*360/5:2 cm) {};
		\node (7) at (2*360/5:2 cm) {};
		\node (8) at (3*360/5:2 cm) {};
		\node (9) at (4*360/5:2 cm) {};
		
		\draw (5) edge (7);
		\path (7) edge (9);
		\path (9) edge (6);
		\path (6) edge (8);
		\path (8) edge (5);
		
		\path (0) edge (5);
		\path (1) edge (6);
		\path (2) edge (7);
		\path (3) edge (8);
		\path (4) edge (9);
		
		\node[label={above:$d_1$}, fill=black] () at (1*360/5:4 cm) {};
		\node[label={left:$s_1$}] () at (2*360/5:4 cm) {};
		\node[label={right:$d_3$}] () at (4*360/5:2 cm) {};
		\node[label={left:$s_3$}] () at (1*360/5:2 cm) {};
		\node[label={above:$s_2$}] () at (5*360/5:2 cm) {};
		\node[label={below:$d_2$}] () at (4*360/5:4 cm) {};
		\node[fill=black] () at (3*360/5:2 cm) {};
		\end{scope}
		
		\begin{scope}[xshift=24cm]
		\node (0) at (0*360/5:4 cm) {};
		\node (1) at (1*360/5:4 cm) {};
		\node (2) at (2*360/5:4 cm) {};
		\node (3) at (3*360/5:4 cm) {};
		\node (4) at (4*360/5:4 cm) {};
		\draw (0) -- (1) -- (2) -- (3) -- (4) -- (0);
		
		\node (5) at (0*360/5:2 cm) {};
		\node (6) at (1*360/5:2 cm) {};
		\node (7) at (2*360/5:2 cm) {};
		\node (8) at (3*360/5:2 cm) {};
		\node (9) at (4*360/5:2 cm) {};
		
		\draw (5) edge (7);
		\path (7) edge (9);
		\path (9) edge (6);
		\path (6) edge (8);
		\path (8) edge (5);
		
		\path (0) edge (5);
		\path (1) edge (6);
		\path (2) edge (7);
		\path (3) edge (8);
		\path (4) edge (9);
		
		\node[label={above:$d_1$}] () at (1*360/5:4 cm) {};
		\node[label={left:$s_1$}] () at (2*360/5:4 cm) {};
		\node[label={right:$d_3$}, fill=black] () at (4*360/5:2 cm) {};
		\node[label={left:$s_3$}] () at (1*360/5:2 cm) {};
		\node[label={above:$s_2$}] () at (5*360/5:2 cm) {};
		\node[label={right:$d_4$}] () at (0*360/5:4 cm) {};
		\node[label={below:$s_4$}] () at (4*360/5:4 cm) {};
		\node[label={left:$d_2$}, fill=black] () at (3*360/5:4 cm) {};
		\end{scope}
		
		\end{tikzpicture}
		\caption{The strategy of Staller in the D-game on $P$.}
		\label{fig:petersenD}
	\end{center}
\end{figure}

\section{Amalgamation lemma and grids}
\label{sec:grids}

In this section we solve the MBTD game for grids and some Cartesian products of paths and cycles. No similar results are known for the (total) domination game. To derive the results the following amalgamation lemma will be utmost useful.

Let $G$ and $H$ be disjoint graphs, and let $G'$ and $H'$ be subgraphs of $G$ and $H$, respectively. If $G'$ and $H'$ are isomorphic, then the {\em amalgamation} of $G$ and $H$ over $G'=H'$ is the graph obtained from the disjoint union of $G$ and $H$ by identifying $G'$ with $H'$ (w.r.t.\ a given, fixed isomorphism $G'\rightarrow H'$).

\begin{lemma}
\label{lem:amalgamation}
Let $G$ be the amalgam of $G_1$ and $G_2$ over $G_0 = G_1\cap G_2$. If Staller has a winning strategy on $G_1$ such that for every possible sequence of moves she isolates a vertex from $G_1\setminus G_2$, then she also has a winning strategy on $G$.
\end{lemma}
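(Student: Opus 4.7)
\medskip\noindent
\textbf{Proof proposal.} The plan is to let Staller play on $G$ by simulating, in her head, an auxiliary MBTD game on $G_1$ where she follows her assumed winning strategy. The only complication is that Dominator may choose vertices of $G_2\setminus G_0$, which do not exist in $G_1$, so Staller must interpret such moves as skipped turns in the imagined game; the No-Skip Lemma then guarantees that this only helps Staller.

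More precisely, Staller maintains throughout the real game an imagined partial play on $G_1$ with the invariant that every vertex of $V(G_1)$ already chosen in the real game has been marked in the imagined game as chosen by the same player, while the remaining vertices of $V(G_1)$ are still available there. When it is Dominator's turn in the real game on $G$: if he plays $x\in V(G_1)$, Staller records $x$ as Dominator's move in the imagined game; if he plays $x\in V(G_2)\setminus V(G_0)$, Staller treats this as Dominator skipping in the imagined game. In either case she then consults her winning strategy on $G_1$ to produce her next move $w\in V(G_1)$, plays $w$ in the real game on $G$, and records $w$ as her move in the imagined game. (If Staller is the first player of the S-game, she just opens with her strategy's first move.) The key bookkeeping point is that $w$ is necessarily a legal move in the real game: the imagined game sees exactly those vertices of $V(G_1)$ that have been claimed in the real game, so if $w$ is available in the imagined game it is available in $G$.

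Since Staller's strategy on $G_1$ wins against every response of Dominator, and by the No-Skip Lemma replacing some of Dominator's moves by skips cannot help him, Staller eventually isolates some vertex $v\in V(G_1)\setminus V(G_2)$ in the imagined game. Here the hypothesis that $v$ lies outside $G_2$ is used crucially: because $v\notin V(G_2)$, no vertex of $V(G_2)\setminus V(G_0)$ is adjacent to $v$ in $G$, so
\[
N_G(v) \;=\; N_{G_1}(v).
\]
Thus isolating $v$ in the imagined game means Staller has chosen every vertex of $N_{G_1}(v)$ in that game, and by our invariant she has also chosen each of those vertices in the real game on $G$. Consequently $v$ is isolated in $G$ as well, so Staller wins on $G$.

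The only real obstacle is the legality-and-simulation bookkeeping around the identified set $V(G_0)$: one must check that vertices of $V(G_0)$ played in $G$ (by either player) are correctly reflected in the imagined game, and conversely that Staller's prescribed move on $G_1$ never collides with a vertex already claimed in $G$. Both follow at once from the invariant above, since every real move that lies in $V(G_1)$ is copied into the imagined game before Staller queries her strategy, and she never plays in $V(G_2)\setminus V(G_0)$.
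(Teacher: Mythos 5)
Your proposal is correct and takes essentially the same route as the paper: an imagination strategy on $G_1$ in which Dominator's moves in $G_2\setminus G_0$ are treated as skips, handled by the No-Skip Lemma, with the hypothesis that the isolated vertex lies in $G_1\setminus G_2$ guaranteeing $N_G(v)=N_{G_1}(v)$. The paper's own proof is just a terser version of this argument; your explicit bookkeeping of the invariant and of why the isolated vertex's neighborhood is unchanged in $G$ fills in details the paper leaves implicit.
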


\proof
Suppose that Staller has a winning strategy on $G_1$ as stated and let the MBTD game be played on $G$. The strategy of Staller is to replicate her specified strategy from $G_1$ on the game on $G$. If Dominator always plays on $G_1$, then Staller clearly wins. On the other hand, if Dominator plays some vertices of $G_2\setminus G_1$, then by Lemma~\ref{lem:no-skip} and the assumption that for every possible sequence of moves she isolates a vertex from $G_1\setminus G_2$, Staller also wins.
\qed

As an application of Lemma~\ref{lem:amalgamation} we prove the following result.

\begin{theorem}
\label{thm:grids}
If $n,m\ge 2$, then the MBTD game played on $P_m\cp P_n$ is ${\cal D}$ if both $n$ and $m$ are even, and it is ${\cal S}$ otherwise.
\end{theorem}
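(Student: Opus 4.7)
The plan is to split into two cases depending on the parity of $m$ and $n$.

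\emph{Case 1: $m$ and $n$ are both even.} I would partition $V(P_m\cp P_n)$ into $mn/4$ vertex-disjoint blocks of the form $\{(2i-1,2j-1),(2i-1,2j),(2i,2j-1),(2i,2j)\}$ for $1\le i\le m/2$ and $1\le j\le n/2$. Each such block induces a copy of $C_4$, which lies in $\mathcal{D}$ by Proposition~\ref{prp:cycles}. Applying Corollary~\ref{cor:win-in-the-other-game}(ii) then yields $P_m\cp P_n\in\mathcal{D}$.

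\emph{Case 2: not both even.} By symmetry assume $n$ is odd. By Corollary~\ref{cor:win-in-the-other-game}(i) it suffices to prove that Staller wins the D-game, since then she also wins the S-game and therefore $P_m\cp P_n\in\mathcal{S}$. I would invoke the Amalgamation Lemma~\ref{lem:amalgamation}, choosing a small subgraph $G_1$ at the top-left corner of the grid (for instance an induced $3\times 3$ corner block, or a $2\times 3$ block when $m=2$). The set $G_1\setminus G_2$ would consist of exactly those vertices of $G_1$ whose grid-neighbors all lie in $G_1$; this is precisely the structural requirement of the lemma and guarantees that any isolation that Staller achieves inside $G_1\setminus G_2$ during the imagined game on $G_1$ is also an isolation in the real game on $P_m\cp P_n$.

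Staller's winning strategy on $G_1$ exploits the fact that every grid-corner has only two neighbors. After Dominator's opening move $d_1$, she plays a vertex that is simultaneously a neighbor of two distinct potential isolation targets lying in $G_1\setminus G_2$ (typically two corners of the $G_1$-block, or a grid-corner together with a nearby edge-vertex), so that Dominator can defend at most one of these threats with $d_2$. Her second move then completes the isolation of the undefended target. Because corners need only two Staller moves to be isolated, such a double threat is realistic even in a $3\times 3$ or $2\times 3$ piece.

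The main obstacle I expect is that Dominator can open with a move \emph{inside} $G_1\setminus G_2$ itself (for instance on the very grid-corner Staller wants to isolate), thereby removing the cheapest two-move target. This forces $G_1$ to be taken slightly larger than the bare minimum, so that Staller retains a second corner or a degree-$3$ target in $G_1\setminus G_2$, and requires a short case analysis over $d_1$. I would also verify by direct inspection the two base subgraphs $P_2\cp P_3$ and $P_3\cp P_3$, which serve as the $G_1$-piece for the ladder case $m=2$ and for the general case $m\ge 3$ respectively.
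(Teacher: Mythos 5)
Your Case 1 is correct and is exactly the paper's argument: partition $V(P_m\cp P_n)$ into $4$-sets inducing $C_4$'s and apply Proposition~\ref{prp:cycles} together with Corollary~\ref{cor:win-in-the-other-game}(ii).

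Case 2, however, has a genuine gap, and it is not one that ``a short case analysis'' or taking $G_1$ ``slightly larger'' can repair. First, the concrete failure. Take your $3\times 3$ corner block $G_1$, so that $G_1\setminus G_2=\{(1,1),(1,2),(2,1),(2,2)\}$. Dominator opens with $d_1=(1,2)$. This single move kills the targets $(1,1)$ and $(2,2)$ (isolating either one requires Staller to play $(1,2)$), and each of the two surviving targets $(1,2)$ and $(2,1)$ requires Staller to play \emph{both} $(1,1)$ and $(2,2)$: indeed $N((1,2))=\{(1,1),(1,3),(2,2)\}$ and $N((2,1))=\{(1,1),(3,1),(2,2)\}$. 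So after Staller's first reply, Dominator grabs whichever of $(1,1),(2,2)$ is still unplayed, and no vertex of $G_1\setminus G_2$ can ever be isolated. (Staller does win the unrestricted D-game on $P_3\cp P_3$, but her winning lines isolate vertices such as $(1,3)$ lying in $G_0=G_1\cap G_2$; isolating those in the game on $G_1$ proves nothing about $G$, since they have further neighbors outside $G_1$.) The same two-move defence kills your $2\times 3$ block for ladders. Second, and decisively: no corner block of \emph{any} fixed size can satisfy the hypothesis of Lemma~\ref{lem:amalgamation}. A corner block, together with its target set (the interior vertices of $G_1$), sits as an amalgamation piece inside every grid whose dimensions exceed those of the block, irrespective of parity. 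If Staller had the required restricted-target strategy on such a block, Lemma~\ref{lem:amalgamation} would make every sufficiently large grid $\cS$ --- contradicting your own Case 1, which shows $P_{2k}\cp P_{2\ell}$ is $\cD$. The amalgamation piece must therefore encode the odd parity of $n$; no parity-blind local gadget can.

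This is precisely what the paper does. Its piece $G_1$ is the \emph{full-width} ladder $P_2\cp P_n$ with $n=2\ell+1$ odd, whose private part $G_1\setminus G_2$ is the entire first row. Staller's strategy there is global, not local: assuming (WLOG, via the row-swap symmetry, or via reflection when $m\ge 4$ so that $d_1$ lies in rows $\ge 3$) that Dominator's first move does not interfere, she plays $(1,2),(1,4),\ldots$ in turn, each move forcing Dominator to reply $(2,1),(2,3),\ldots$, and her move $(1,2\ell)$ creates a double threat on $(1,2\ell-1)$ and $(1,2\ell+1)$ --- a double threat that exists exactly because the ladder has odd length. Amalgamating this ladder (rows $1$--$2$) with the rest of the grid (rows $2$--$m$) finishes the case $m\ge 4$, and $P_3\cp P_3$ gets a separate short analysis. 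So your overall architecture (a $C_4$-partition for the even case, amalgamation for the rest) matches the paper's, but the essential missing idea is that Staller's strategy on the amalgamation piece must be a parity-driven forcing chain along an entire odd side of the grid, not a double threat confined to a corner.
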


\proof
Consider first the grids $P_{2k}\cp P_{2\ell}$. Then $V(P_{2k}\cp P_{2\ell})$ can be partitioned into sets of order $4$ each inducing a $4$-cycle. Since $C_4$ is a ${\cal D}$ graph (Proposition~\ref{prp:cycles}), Corollary~\ref{cor:win-in-the-other-game}(ii) implies that $P_{2k}\cp P_{2\ell}$ is a ${\cal D}$ graph.

Consider next the prisms $P_{2}\cp P_{2\ell+1}$, $\ell\ge 1$. Setting $V(P_n) = [n]$ we have $V(P_{2}\cp P_{2\ell+1}) = \{(i,j):\ i\in [2], j\in [2\ell +1]\}$. Let $X$ and $Y$ be the bipartition sets of $P_{2}\cp P_{2\ell+1}$, where $(1,1)\in X$. Assume without loss of generality that in the D-game Dominator first played a vertex from $X$. Staller replies with the move $(1,2)\in Y$. Then Dominator is forced to play $(2,1)$ in order to totally dominate the vertex $(1,1)$, for otherwise Staller would already win the game in her next move. Inductively, if $(1,2q)$ is the last vertex played by Staller, her next move is $(1,2q+2)\in Y$ and this forces Dominator to play $(2,2q+1)\in Y$. Since the first move of Dominator was a vertex from $X$, all these moves are legal throughout the game and no threats of Staller are predominated. When Staller finally plays $(1,2\ell)$, one of the vertices $(1,2\ell-1)$ and $(1,2\ell+1)$ can be isolated by Staller after the next move of Dominator. Hence Staller will win the game and thus $P_{2}\cp P_{2\ell+1}$ is ${\cal S}$ by Corollary~\ref{cor:win-in-the-other-game}(i).

Note that by the above strategy of Staller on the game played on $P_{2}\cp P_{2\ell+1}$, the set of vertices on which she can finish the game by isolating them is a subset of $[1]\times [2\ell +1]$. Consider next the grids $P_{m}\cp P_{2\ell+1}$, $m\ge 4$. Then we may without loss of generality assume that in the D-game Dominator first plays a vertex $(i,j)$, where $i\ge 3$. Consider now $P_{m}\cp P_{2\ell+1}$ as the amalgamation of the prism $P_{2}\cp P_{2\ell+1}$ induced by the vertices $[2]\times [2\ell +1]$ and the prism $P_{m-1}\cp P_{2\ell+1}$ induced by the vertices $\{2,\ldots, m\}\times [2\ell +1]$. Then by the above and by Lemma~\ref{lem:amalgamation}, Staller has a winning strategy.

It remains to consider the grid $P_{3}\cp P_{3}$. If Dominator starts with $(2,2)$, then Staller replies with $(1,2)$ threatening $(1,1)$ and $(1,3)$. Otherwise, assume without loss of generality that Dominator first plays a vertex $(3,j)$, where $j\in [3]$. Then Staller replies with the move $(1,2)$, threatening the same vertices as before.
\qed

\begin{theorem}
\label{t:P2kcycle}
For $k \ge 1$ and $m \ge 3$, the MBTD game played on $P_{2k}  \cp C_{m}$ is $\cD$.
\end{theorem}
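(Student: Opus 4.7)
The plan is to use a pairing strategy for Dominator combined with a strip partition. First, I would partition the vertex set $V(P_{2k} \cp C_m)$ into the $k$ horizontal strips $V_i = \{2i-1, 2i\} \times [m]$, $i \in [k]$, each of which induces a copy of the prism $P_2 \cp C_m$. By Corollary~\ref{cor:win-in-the-other-game}(ii), it will then suffice to show that $P_2 \cp C_m \in {\cal D}$ for every $m \ge 3$.

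For the prism $P_2 \cp C_m$, I would exhibit the perfect matching $\mathcal{P} = \{p_j : j \in [m]\}$ of its vertex set, where $p_j = \{(1, j), (2, j+1)\}$ and the second coordinate is computed modulo $m$. The key property to verify is that every open neighborhood of $P_2 \cp C_m$ contains an entire pair of $\mathcal{P}$. Indeed, for $v = (1, j)$ the neighborhood $N(v) = \{(2, j), (1, j-1), (1, j+1)\}$ contains $p_{j-1} = \{(1, j-1), (2, j)\}$, while for $v = (2, j)$ the neighborhood $N(v) = \{(1, j), (2, j-1), (2, j+1)\}$ contains $p_j = \{(1, j), (2, j+1)\}$. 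The hypothesis $m \ge 3$ is used to ensure that these three listed neighbors are distinct vertices.

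With this matching, Dominator applies the classical Maker--Breaker pairing response: whenever Staller selects a vertex whose $\mathcal{P}$-partner is still unplayed, Dominator responds with that partner; in every other situation (including his opening move in the D-game) he plays an arbitrary free vertex, which by the No-Skip Lemma (Lemma~\ref{lem:no-skip}) does not disadvantage him. A routine induction on the turns then shows that when the game ends Dominator owns at least one vertex from each pair in $\mathcal{P}$. Since every open neighborhood contains a whole pair, it must meet Dominator's set, so his selected vertices form a total dominating set of $P_2 \cp C_m$, and he wins.

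The only real obstacle is exhibiting a pairing whose pairs each lie in some open neighborhood. The ``diagonal'' pairing $(1,j) \leftrightarrow (2, j+1)$ is tailored to the structure of a prism neighborhood, which always consists of one spoke vertex together with two cycle vertices; each such neighborhood then automatically contains exactly one pair combining a spoke with an adjacent cycle vertex. This unified pairing sidesteps the even/odd case split that a naive vertex partition into $4$-cycles would otherwise force.
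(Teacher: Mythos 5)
Your proof is correct, but it takes a genuinely different route from the paper for the prism case. The paper splits on the parity of $m$: for even $m$ it partitions $V(P_{2k}\cp C_{2\ell})$ into $4$-sets inducing $C_4$'s and invokes Corollary~\ref{cor:win-in-the-other-game}(ii), while for odd $m$ it arranges the vertices of $P_2\cp C_{2\ell+1}$ on an imaginary Hamiltonian cycle of length $4\ell+2$ on which every open neighborhood appears as three consecutive vertices, and has Dominator play adaptively so that Staller never occupies three consecutive positions. Your diagonal pairing $p_j=\{(1,j),(2,j+1)\}$ handles both parities at once: each open neighborhood of $P_2\cp C_m$ contains a full pair (as you verify), so the standard Breaker pairing strategy wins, and the strip decomposition into copies of $P_2\cp C_m$ plus Corollary~\ref{cor:win-in-the-other-game}(ii) finishes the job. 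In fact, for odd $m$ your pairs are exactly alternate edges of the paper's imaginary cycle, so your argument can be read as a static (pairing) refinement of the paper's adaptive strategy; what it buys is the elimination of the even/odd case split and a one-line verification in place of the contradiction argument about three consecutive Staller vertices, at the cost of the structural picture the Hamiltonian cycle provides. One small point of care: rather than saying Dominator ``owns at least one vertex from each pair when the game ends,'' it is cleaner to argue that Staller never acquires both vertices of any pair (if she takes one while its partner is free, Dominator takes the partner immediately; she cannot already own the partner by the same rule applied earlier), so she can never complete an open neighborhood, and when all $2m$ vertices of the prism are claimed Dominator's set meets every open neighborhood and is therefore a total dominating set.
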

\proof We first consider the MBTD game played on $P_{2k} \cp C_{2\ell}$ for some integers $k, \ell \ge 1$. Since $V(P_{2k}  \cp C_{2\ell})$ can be partitioned into sets of order~$4$ each inducing a $4$-cycle, it follows from Corollary~\ref{cor:win-in-the-other-game}(ii) Proposition~\ref{prp:cycles} that $P_{2k} \cp C_{2\ell}$ is a $\cD$ graph.

Next we consider the prism $P_{2} \cp C_{2\ell + 1}$ for some integer $\ell \ge 1$.  Setting $V(P_2) = [2]$ and $V(C_n) = [n]$, we have $V(P_2 \cp C_{2\ell + 1}) = \{(i,j):\ i \in [2], j \in [2\ell+1] \}$. \Dom now imagines he is playing on the imaginary $(4\ell + 2)$-cycle $C$ given by $v_1 v_2 \ldots v_{4\ell + 2} v_1$ where
\[
v_i = \left\{
\begin{array}{ll}
(1,i); & \mbox{if $i \in [2\ell+1]$ is odd}\,, \\
(2,i); & \mbox{if $i \in [2\ell]$ is even} \\
\end{array}
\right.
\]
and
\[
v_{(2\ell + 1) + i} = \left\{
\begin{array}{ll}
(2,i); & \mbox{if $i \in [2\ell+1]$ is odd}\,, \\
(1,i); & \mbox{if $i \in [2\ell]$ is even}\,. \\
\end{array}
\right.
\]
That is, $C \colon v_1 v_2 \ldots v_{4\ell + 2} v_1$ is the cycle is given by
\[
(1,1), (2,2), (1,3), (2,4), \ldots, (1,2\ell + 1), (2,1), (1,2), (2,3),\ldots, (2,2\ell + 1), (1,1)\,.
\]

We note that every vertex in the prism $P_{2}  \cp C_{2\ell + 1}$ has exactly three neighbors and these three neighbors appear as three consecutive vertices on the cycle $C$. Thus, Dominator's strategy is to guarantee that no three consecutive vertices on the (imaginary) cycle $C$ are all played by Staller. \Dom achieves his goal as follows. Suppose that Staller's first move of the game played on $P_{2}  \cp C_{2\ell + 1}$ is the vertex $v_i$ for some $i \in [4\ell + 2]$. \Dom responds as follows. \Dom plays the vertex $v_{i+1}$ if it has not yet been played (where addition is taken modulo~$2\ell + 4$). If, however, the vertex $v_{i+1}$ has already been played, then \Dom plays the vertex $v_{i-1}$ if it has not yet been played. Otherwise, if both $v_{i+1}$ and $v_{i-1}$ have already been played in the game, then \Dom simply plays an arbitrary (legal) vertex that has not yet been played.

Suppose, to the contrary, that there are three consecutive vertices $v_i, v_{i+1}, v_{i+2}$ all played by \St during the course of the game. If \St played the vertex $v_i$ before $v_{i+1}$, then \Dom would have replied to her move $v_i$ by playing $v_{i+1}$, contradicting our supposition that $v_{i+1}$ is played by Staller. Hence, \St played the vertex $v_{i+1}$ before she played the vertex $v_{i}$. Dominator's strategy implies that when \St played the vertex $v_{i+1}$, he would have either played the vertex $v_{i+2}$ if it had not yet been played or he would play the vertex $v_i$ (which has not yet been played). In the former case, we contradict the supposition that $v_{i+2}$ is played by Staller. In the latter case, we contradict the supposition that $v_{i}$ is played by Staller. Since both cases produce a contradiction, we deduce that no three consecutive vertices on $C$ are all played by \St during the course of the game. This implies by our earlier observations, that \Dom plays a neighbor of every vertex in the prism $P_{2} \cp C_{2\ell + 1}$. Equivalently, the moves played by \Dom form a total dominating set in the prism $P_{2} \cp C_{2\ell + 1}$. Hence, \Dom wins the game, as claimed.

Next we consider the prism $P_{2k}  \cp C_{2\ell + 1}$ for some integer $k \ge 2$ and $\ell \ge 1$.  We note that there is a partition of $V(P_{2k} \cp C_{2\ell + 1})$ into sets $V_1, V_2, \ldots, V_k$ each of which induce a copy of the prism $P_{2} \cp C_{2\ell + 1}$. Since the prism $P_{2}  \cp C_{2\ell + 1}$ is a $\cD$ graph, Corollary~\ref{cor:win-in-the-other-game}(ii) implies that $P_{2k}  \cp C_{2\ell + 1}$ is a $\cD$ graph.~\qed

\begin{theorem}
\label{t:P3cycle}
If $k \ge 3$ and $k \ne 4$, then \St wins the S-game played on $P_{3}  \cp C_{k}$.
\end{theorem}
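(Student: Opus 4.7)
The plan is to split the argument by $k$: handle the small values $k \in \{3, 5\}$ by explicit case analysis, and the general range $k \ge 6$ by an application of the amalgamation lemma (Lemma~\ref{lem:amalgamation}) combined with Theorem~\ref{thm:grids}.

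For $k = 3$ and $k = 5$, I would exploit the rotational symmetry of $C_k$ and the reflection $i \mapsto 4 - i$ of $P_3$ to reduce Staller's first move $s_1'$ to the two representatives $(1, 0)$ and $(2, 0)$. For each such first move, and for each symmetry class of Dominator's response, I would exhibit a short winning continuation in which Staller creates two simultaneous isolation threats which Dominator cannot block. Diagrams analogous to Figures~\ref{fig:petersenS} and~\ref{fig:petersenD} would display these strategies; since $P_3 \cp C_3$ has only $9$ vertices and $P_3 \cp C_5$ has only $15$ vertices, the case trees are compact.

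For $k \ge 6$, I would decompose $G = P_3 \cp C_k$ as $G_1 \cup G_2$, where $G_1 \cong P_3 \cp P_k$ is the grid obtained by deleting the three wrap-around edges and $G_2$ is the graph on the six boundary vertices $\{(i, 0), (i, k-1) : i \in [3]\}$ whose edges are exactly those three wrap-around edges. Then $G_0 = G_1 \cap G_2$ is the edgeless graph on these six vertices and $V(G_1) \setminus V(G_2)$ is the set of interior columns $\{1, \ldots, k-2\}$. By Theorem~\ref{thm:grids}, $G_1$ is an $\cS$-graph. Provided Staller has a winning S-game strategy on $G_1$ that always isolates a vertex in the interior columns, Lemma~\ref{lem:amalgamation} immediately yields Staller's win on $G$.

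The main obstacle is producing such an interior-isolating strategy on $G_1$, because the proof of Theorem~\ref{thm:grids} is built around a prism $P_2 \cp P_{2\ell+1}$ attached at the boundary of the grid, and the vertex it isolates lies in the boundary column. To overcome this I would slide the prism attack inward: instead of targeting columns $\{0, 1\}$, Staller targets columns $\{r, r+1\}$ for some interior $r$ chosen, in the S-game, in response to Dominator's first move so as to place the prism away from $d_1'$. This works cleanly once $k$ is large enough that such an $r$ always exists; the remaining values $k \in \{6, 7\}$ could be handled by ad hoc direct analysis in the spirit of the $k \in \{3, 5\}$ cases. A secondary subtlety is the transition from the D-game proof of Theorem~\ref{thm:grids} to the S-game of Lemma~\ref{lem:amalgamation}, which is handled by the No-Skip Lemma (Lemma~\ref{lem:no-skip}), allowing Staller to absorb her extra first S-game move into the D-game strategy on $G_1$.
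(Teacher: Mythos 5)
Your overall framework---cutting the three wrap-around edges, viewing $P_3\cp C_k$ as an amalgam of the grid $G_1=P_3\cp P_k$ with the graph $G_2$ carrying the wrap edges, and invoking Lemma~\ref{lem:amalgamation}---is structurally sound: since $V(G_2)\subseteq V(G_1)$, a Staller win on $G_1$ that always ends by isolating a vertex in an interior column would indeed transfer to $P_3\cp C_k$. The gap is that you never produce such a strategy, and the mechanism you propose, ``sliding the prism attack inward,'' cannot work as described. The prism strategy inside the proof of Theorem~\ref{thm:grids} is a chain of \emph{forcing} moves anchored at a degree-$2$ vertex: Staller plays $(1,2)$ and Dominator is forced to answer $(2,1)$ precisely because the corner $(1,1)$ has only those two neighbors. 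In $P_3\cp P_k$ the degree-$2$ vertices are exactly the four corners; every vertex in an interior column has degree $3$ or $4$. So if you translate the prism to columns $\{r,r+1\}$ (or restrict a two-row strip to a range of interior columns), each ``threatened'' vertex keeps a neighbor outside the prism, Dominator defends by playing that outside neighbor, and the forcing chain never gets started. The obstruction is therefore not the existence of a suitable $r$ for large $k$; it is that the prism strategy itself is unusable away from the boundary, so the interior-isolating strategy is exactly the missing content, not a routine adaptation.

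What fills the hole is a different kind of double threat, and this is what the paper does---directly on the cylinder, uniformly for all $k\ge 5$, with no amalgamation at all: Staller opens at a degree-$4$ vertex $s_1'=(2,3)$, normalizes $d_1'$ by symmetry, plays $s_2'=(3,2)$, which \emph{is} forcing because $(3,3)$ already has two of its three neighbors taken, and after the forced $d_2'=(3,4)$ plays $s_3'=(2,1)$, creating the simultaneous threats $(1,2)$ (isolating $(2,2)$) and $(3,k)$ (isolating $(3,1)$). Note that this strategy even exploits a wrap-around edge, so it is genuinely a cylinder strategy rather than a grid one. If you wanted to salvage your route, you would have to build an analogous interior double-threat strategy on the grid (possible for large $k$, with care about Dominator's free first move in the S-game), but at that point the amalgamation detour buys nothing, and it additionally saddles you with an unwritten $k=5$ analysis and ad hoc $k\in\{6,7\}$ cases, all of which the paper's single uniform argument covers for free.
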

\proof We first consider the S-game played on  $P_{3}  \cp C_{3}$. Letting $V(P_3) = [3]$ and $V(C_3) = [3]$, we have $V(P_3  \cp C_{3}) = \{(i,j):\ i, j \in [3]\}$. \St first plays the vertex $s_1'= (2,2)$. By symmetry, we may assume that the move $d_1'$ is one of the vertices $(1,1)$, $(1,2)$ or $(2,3)$.

Suppose that $d_1'\in \{(1,1), (1,2)\}$. In this case, \St plays $s_2' = (3,3)$, thereby forcing \Dom to play $d_2' = (3,1)$. \St then plays $s_3' = (2,1)$ with the double threat of playing $(1,3)$ and $(3,2)$.

Suppose next that $d_1' = (2,1)$. In this case, \St plays $s_2' = (3,1)$, thereby forcing \Dom to play $d_2' = (3,3)$. \St then continues with $s_3' = (1,1)$ with the double threat of playing $(1,3)$ and $(2,3)$. In both cases, Staller has a winning strategy. Thus, \St wins the S-game played on $P_{3}  \cp C_{3}$.

Next we consider the S-game played on $P_3  \cp C_{k}$ where $k \ge 5$. Letting $V(P_3) = [3]$ and $V(C_k) = [k]$, we have $V(P_3 \cp C_{k}) = \{(i,j):\ i \in [3], j \in [k] \}$. \St starts with an arbitrary vertex of degree~$4$; that is, she plays $s_1' = (2,i)$ for some $i \in [k]$. By symmetry and for notational convenience, we may assume that $s_1' = (2,3)$. By symmetry, for $i \in [3]$ and $j \in [2]$, the moves $d_1' = (i,j)$ and $d_1' = (i,6-j)$ are identical. Further, the  moves $d_1' = (1,4)$ and $d_1' = (3,4)$ are identical, as are the first moves $d_1' = (1,k)$ and $d_1' = (3,k)$. Hence we may assume that $d_1'\ne (i,j)$, where $i \in [3]$ and $j \in [2]$ and that $d_1' \notin \{(3,4), (3,k)\}$. With these assumptions, \St plays $s_2' = (3,2)$, thereby forcing \Dom to play $d_2' = (3,4)$. \St now replies with $s_3' = (2,1)$ with the double threat of playing $(1,2)$ and $(3,k)$. Thus \St has a winning strategy, implying that she wins the S-game played on $P_3  \cp C_{k}$ for $k \ge 5$.~\qed

To conclude the section, let us call a connected graph $G$ to be ${\cal D}$-minimal if $G$ is ${\cal D}$ but $G-e$ is not ${\cal D}$ for an arbitrary $e\in E(G)$. Then we have:

\begin{proposition}
\label{p:Dminimal}
The following holds. \vspace*{-2mm}
\begin{enumerate}
\item If $k \ge 2$, then $K_{2,k}$ is $\cD$-minimal.
\item If $k\ge 1$, then $P_2 \cp C_{2k+1}$ is $\cD$-minimal.
\end{enumerate}
\end{proposition}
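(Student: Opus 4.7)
The plan is to handle the two parts separately. In each case I first verify that the graph lies in $\cD$ and then, for every edge $e$, describe a Staller S-game strategy on $G - e$; by Corollary~\ref{cor:win-in-the-other-game}(i) this places $G - e$ outside $\cD$.

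For part (a), write the bipartition of $K_{2,k}$ as $U = \{u_1, u_2\}$ and $V = \{v_1, \ldots, v_k\}$ and observe that every set of the form $\{u_i, v_j\}$ is a total dominating set. In the S-game Dominator answers Staller's first move by a vertex from the opposite bipartition class, invoking $k \ge 2$ to guarantee that an unused vertex of $V$ is available when Staller opens in $V$, and after Staller's second move he completes a pair $\{u_i, v_j\}$ from within his selections. Hence $K_{2,k} \in \cD$. For $\cD$-minimality, by vertex transitivity inside each bipartition class we may assume $e = u_1 v_1$; in $K_{2,k} - e$ the vertex $v_1$ has a unique neighbor $u_2$, so the opening $s_1' = u_2$ isolates $v_1$ and $K_{2,k} - e \notin \cD$.

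For part (b), $P_2 \cp C_{2k+1} \in \cD$ is Theorem~\ref{t:P2kcycle}. The automorphism group of the prism (cyclic rotations of the cycle composed with the layer-swap) has two orbits on edges, so it suffices to consider a rung edge $e = (1,1)(2,1)$ and a cycle edge $e = (1,1)(1,2)$. In each case I would give Staller's S-game strategy in two phases. In the forcing phase the first one or two Staller moves each leave a degree-two endpoint of $e$ with a unique free neighbor, so Dominator is compelled to use his reply totally dominating that endpoint. Concretely, for the rung case $s_1' = (1,2)$ and $s_2' = (2, 2k+1)$ force $d_1' = (1, 2k+1)$ and $d_2' = (2, 2)$; for the cycle case $s_1' = (1, 3)$ forces $d_1' = (2, 2)$, and for $k \ge 2$ the second move $s_2' = (1, 2k+1)$ forces $d_2' = (2, 1)$ (for $k = 1$ the single opening $s_1' = (1, 3) = (1, 2k+1)$ already attacks both $(1,1)$ and $(1,2)$ through disjoint unique free neighbors and wins on the next turn). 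In the finishing phase Staller plays further vertices in the upper layer to iterate similar forcing moves---for the rung case $s_3' = (1, 4)$, and if $k \ge 3$ then also $s_4' = (1, 2k)$; for the cycle case the analogous $s_3' = (1, 5), s_4' = (1, 7), \ldots$---until the two arcs of forced lower-layer Dominator-vertices meet and Staller's last move creates two simultaneous single-candidate isolation threats whose required Dominator responses are distinct. Since Dominator can answer with only a single vertex, at least one threat survives and Staller isolates on her next move.

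The main obstacle is verifying, uniformly in $k$, that the finishing phase yields two disjoint single-candidate threats. After the forcing phase, many not-totally-dominated vertices share common candidate Dominator-neighbors, so Dominator can often defend several threats at once by a single well-placed move; thus Staller must continue to force Dominator's replies via a chain of single-candidate threats before springing the final double threat. A short case split into $k = 1$, $k = 2$, and $k \ge 3$ is natural because the odd cycle wraps differently in small cases---for instance, when $k = 1$ or $k = 2$ certain vertices of the finishing phase already coincide with Dominator's forced moves from the forcing phase, so fewer iterations are needed.
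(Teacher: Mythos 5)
Your part~(a) membership argument is incorrect. In $K_{2,k}$ the open neighborhood of every $v_j$ is exactly $\{u_1,u_2\}$, so the moment Staller owns both $u_1$ and $u_2$ she has isolated every vertex of $V$ and the game is over. Against your prescription that Dominator ``answers Staller's first move by a vertex from the opposite bipartition class'', Staller opens $s_1'=u_1$, Dominator answers with some $v_j$, and Staller replies $s_2'=u_2$ and wins; for $k=2$ the opposite-class reply loses just as badly when Staller opens in $V$, since she then takes the other $V$-vertex and isolates both $u_1$ and $u_2$. The only correct reply to a move in $U$ is the \emph{other} vertex of $U$ (this is precisely Dominator's antipodal reply on $C_4$ in Proposition~\ref{prp:cycles}), and the paper sidesteps designing a bespoke strategy for general $k$ altogether by noting $K_{2,k}\cong (C_4)_u[\overline{K}_{k-1}]$ and invoking the Blow-Up Lemma~\ref{lem:blow-up}. (Your minimality argument for (a) --- deleting any edge creates a degree-one vertex whose support Staller plays first --- is the paper's argument and is fine.)

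In part~(b) your skeleton agrees with the paper (membership from Theorem~\ref{t:P2kcycle}, reduction by symmetry to one rung edge and one cycle edge), and your cycle-edge sweep is essentially a relabelled copy of the paper's strategy and does close with a double threat. The genuine gap is the rung case, which you yourself flag as ``the main obstacle''. Your explicit moves fail for $k\ge 4$: with $e=(1,1)(2,1)$, after $s_1'=(1,2)$, $s_2'=(2,2k+1)$, $s_3'=(1,4)$, $s_4'=(1,2k)$, the unique live threat is $(1,1)$ (isolating $(1,2k+1)$); Dominator plays $(1,1)$, no double threat exists, and your argument ends with the game undecided. A both-ends sweep can be completed, but it requires an induction on how the two arcs meet, which you have not carried out. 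The paper's rung strategy needs no such induction because it is uniformly four moves long: in its labelling $e=(1,2k)(2,2k)$, Staller plays $(1,2k-1)$, $(2,2k-1)$, $(1,2k-2)$, $(2,2k-2)$; the first three moves force $(1,2k+1)$, $(2,2k+1)$, $(1,2k)$, and the fourth creates the double threat $(2,2k)$ versus $(2,2k-3)$ (with $k=1$ treated separately). The idea you are missing is to move into the second layer right beside the deleted rung instead of sweeping the top layer around the cycle.
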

\proof By Proposition~\ref{prp:cycles}, the cycle $C_4$ is $\cD$; that is, $K_{2,2}$ is $\cD$. Applying the Blow-Up Lemma to the graph $G = C_4$ and to an arbitrary vertex $u$ of $G$, shows that the graph $G_u[\overline{K}_{k-1}] \cong K_{2,k}$ is $\cD$ for every $k \ge 3$. ($\overline{G}$ denotes the complement of a graph $G$.) Thus, $K_{2,k}$ is $\cD$. Further, if $G \cong K_{2,k}$ and $e$ is an arbitrary edge of $G$, then $G - e$ contains a vertex of degree~$1$. \St wins the S-game on $G - e$ by selecting $s_1'$ as the neighbor of the vertex of degree~$1$. Hence, $G - e$ is not $\cD$, implying that $K_{2,k}$ is $\cD$-minimal. This proves Part~(a).

To prove Part~(b), consider the graph $G = P_2 \cp C_{2k+1}$ where $k \ge 1$. By Theorem~\ref{t:P2kcycle}, the MBTD game played on $G$ is $\cD$. We now consider an arbitrary edge $e$ of $G$. Setting $V(P_2) = [2]$ and $V(C_n) = [n]$, we have $V(G) = \{(i,j):\ i \in [2], j \in [2k+1] \}$. By symmetry we may assume that $e$ is either the edge joining the vertices $(1,2k)$ and $(2,2k)$ or the edge joining the vertices $(1,1)$ and $(1,2k+1)$.

Consider the S-game played on $G$. Suppose firstly that $e = (1,2k)(2,2k)$. \St plays $s_1' = (1,2k-1)$, thereby forcing \Dom to play $d_1' = (1,2k+1)$. If $k=1$, then \St plays $s_2' = (2,3)$ with the double threat of playing the vertex $(1,2)$ and the vertex $(2,1)$. Hence, we may assume that $k \ge 2$, for otherwise \St immediately wins the S-game in $G - e$. With this assumption, \St plays as her second move the vertex $(2,2k-1)$, thereby forcing \Dom to play $d_2' = (2,2k+1)$. \St then replies with $s_3' = (1,2k-2)$, thereby forcing \Dom to choose $d_3' = (1,2k)$. \St now plays $s_4' = (2,2k-2)$, with the double threat of playing the vertex $(2,2k)$ and the vertex $(2,2k-3)$, thereby winning the S-game in $G - e$. Staller's winning strategy is illustrated in Figure~\ref{f:Swin1}(a) in the special case when $k = 7$.

\begin{figure}[ht!]
\begin{center}
\begin{tikzpicture}[scale=.8,style=thick,x=1cm,y=1cm]
\def\vr{2.5pt} 
\path (0,0) coordinate (v1);
\path (1,0) coordinate (v2);
\path (2,0) coordinate (v3);
\path (3,0) coordinate (v4);
\path (4,0) coordinate (v5);
\path (5,0) coordinate (v6);
\path (6,0) coordinate (v7);
\path (0,1) coordinate (u1);
\path (1,1) coordinate (u2);
\path (2,1) coordinate (u3);
\path (3,1) coordinate (u4);
\path (4,1) coordinate (u5);
\path (5,1) coordinate (u6);
\path (6,1) coordinate (u7);
\draw (v1) -- (v2);
\draw (v2) -- (v3);
\draw (v3) -- (v4);
\draw (v4) -- (v5);
\draw (v5) -- (v6);
\draw (v6) -- (v7);
\draw (u1) -- (u2);
\draw (u2) -- (u3);
\draw (u3) -- (u4);
\draw (u4) -- (u5);
\draw (u5) -- (u6);
\draw (u6) -- (u7);
\draw (v1) -- (u1);
\draw (v2) -- (u2);
\draw (v3) -- (u3);
\draw (v4) -- (u4);
\draw (v5) -- (u5);
\draw (v7) -- (u7);
\draw (v1) to[out=-50,in=-120, distance=1.75cm ] (v7);
\draw (u1) to[out=50,in=120, distance=1.75cm ] (u7);
\draw (v1) [fill=white] circle (\vr);
\draw (v2) [fill=white] circle (\vr);
\draw (v3) [fill=white] circle (\vr);
\draw (v4) [fill=white] circle (\vr);
\draw (v5) [fill=white] circle (\vr);
\draw (v6) [fill=white] circle (\vr);
\draw (v7) [fill=white] circle (\vr);
\draw (u1) [fill=white] circle (\vr);
\draw (u2) [fill=white] circle (\vr);
\draw (u3) [fill=white] circle (\vr);
\draw (u4) [fill=white] circle (\vr);
\draw (u5) [fill=white] circle (\vr);
\draw (u6) [fill=white] circle (\vr);
\draw (u7) [fill=white] circle (\vr);
\draw[anchor = west] (u7) node {{\small $d_2'$}};
\draw[anchor = west] (v7) node {{\small $d_1'$}};
\draw[anchor = north] (v6) node {{\small $d_3'$}};
\draw[anchor = south] (u5) node {{\small $s_2'$}};
\draw[anchor = north] (v5) node {{\small $s_1'$}};
\draw[anchor = south] (u4) node {{\small $s_4'$}};
\draw[anchor = north] (v4) node {{\small $s_3'$}};
\draw (3.5,-1.7) node {{\small (a)}};
\path (9,0) coordinate (v1);
\path (10,0) coordinate (v2);
\path (11,0) coordinate (v3);
\path (12,0) coordinate (v4);
\path (13,0) coordinate (v5);
\path (14,0) coordinate (v6);
\path (15,0) coordinate (v7);
\path (9,1) coordinate (u1);
\path (10,1) coordinate (u2);
\path (11,1) coordinate (u3);
\path (12,1) coordinate (u4);
\path (13,1) coordinate (u5);
\path (14,1) coordinate (u6);
\path (15,1) coordinate (u7);
\draw (v1) -- (v2);
\draw (v2) -- (v3);
\draw (v3) -- (v4);
\draw (v4) -- (v5);
\draw (v5) -- (v6);
\draw (v6) -- (v7);
\draw (u1) -- (u2);
\draw (u2) -- (u3);
\draw (u3) -- (u4);
\draw (u4) -- (u5);
\draw (u5) -- (u6);
\draw (u6) -- (u7);
\draw (v1) -- (u1);
\draw (v2) -- (u2);
\draw (v3) -- (u3);
\draw (v4) -- (u4);
\draw (v5) -- (u5);
\draw (v6) -- (u6);
\draw (v7) -- (u7);
\draw (u1) to[out=50,in=120, distance=1.75cm ] (u7);
\draw (v1) [fill=white] circle (\vr);
\draw (v2) [fill=white] circle (\vr);
\draw (v3) [fill=white] circle (\vr);
\draw (v4) [fill=white] circle (\vr);
\draw (v5) [fill=white] circle (\vr);
\draw (v6) [fill=white] circle (\vr);
\draw (v7) [fill=white] circle (\vr);
\draw (u1) [fill=white] circle (\vr);
\draw (u2) [fill=white] circle (\vr);
\draw (u3) [fill=white] circle (\vr);
\draw (u4) [fill=white] circle (\vr);
\draw (u5) [fill=white] circle (\vr);
\draw (u6) [fill=white] circle (\vr);
\draw (u7) [fill=white] circle (\vr);
\draw[anchor = south] (u1) node {{\small $d_1'$}};
\draw[anchor = south] (u3) node {{\small $d_2'$}};
\draw[anchor = north] (v2) node {{\small $s_1'$}};
\draw[anchor = north] (v4) node {{\small $s_2'$}};
\draw[anchor = north] (v6) node {{\small $s_3'$}};
\draw (12.5,-1.7) node {{\small (b)}};
\end{tikzpicture}
\end{center}
\vskip -0.5cm
\caption{Illustrating Staller's winning strategy in the proof of Proposition~\ref{p:Dminimal}} \label{f:Swin1}
\end{figure}
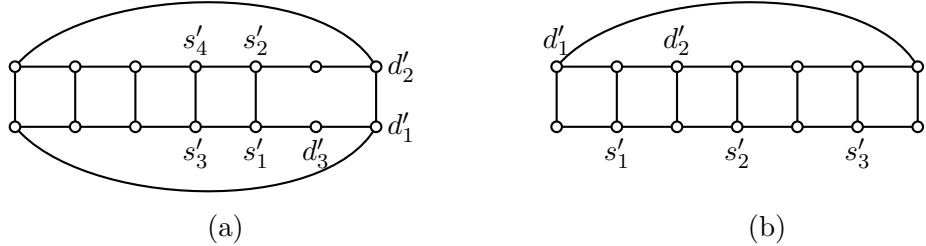

Suppose next that $e = (1,1)(1,2k+1)$.  Then \St starts the S-game with the move $s_1' = (1,2)$. If $k=1$, then this first move of \St has the double threat of playing the vertex $(2,1)$ and the vertex $(2,3)$. Hence, we may assume that $k \ge 2$, for otherwise \St immediately wins the S-game in $G - e$. With this assumption, \Dom is forced to play $d_1' = (2,1)$. \St then replies with $s_2' = (1,4)$. If $k = 2$, then this second move of \St has the double threat of playing the vertex $(2,3)$ and the vertex $(2,5)$. Hence, we may assume that $k \ge 3$, for otherwise \St wins the S-game in $G - e$. Continuing in this way, \St plays as her first $k-1$ moves the vertices $(1,2)$, $(1,4)$, \ldots, $(1,2k-2)$ in turn, thereby forcing \Dom to play as his first $k-1$ moves the vertices $(2,1)$, $(2,3)$, \ldots, $(2,2k-3)$ in turn. \St then plays $s_k' = (1,2k)$, with the double threat of playing the vertex $(2,2k-1)$ and the vertex $(2,2k+1)$,  thereby winning the S-game in $G - e$. Staller's winning strategy is illustrated in Figure~\ref{f:Swin1}(b) in the special case when $k = 7$.

Hence, in both cases $G - e$ is not $\cD$, implying that $P_2\cp C_{2k+1}$ is $\cD$-minimal.~\qed

\section{Cacti}
\label{sec:cacti}

A connected graph $G$ is a {\em cactus} if every block of $G$ is a cycle or $K_2$. An {\em end-block} of a (cactus) graph $G$ is a block of $G$ that intersects other blocks of $G$ in at most one vertex. In addition, $G$ is a {\em star cactus} if $G$ contains a vertex that is contained in each block of $G$. Equivalently, $G$ is a star cactus if every block of $G$ is an end-block. We have used the name star cactus because star cacti restricted to the class of trees are the stars $K_{1,n}$.

In this section we classify cacti with respect to the classes ${\cal D}$, ${\cal S}$, and ${\cal N}$ and begin with a sequence of lemmata.

\begin{lemma}
\label{lem:star-cactus}
Let $G$ be a star cactus with at least two blocks. Then the MBTD game is ${\cal N}$ if and only if the cycles of $G$ are of length at most $5$ and $G$ contains at least one of the following: a $3$-cycle, a $4$-cycle, or two $K_2$ blocks. Otherwise, the game is ${\cal S}$.
\end{lemma}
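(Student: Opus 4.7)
My plan is first to show that Staller wins the S-game on any star cactus $G$ with at least two blocks (so $G \notin \cD$), and then to settle the ${\cal N}$-versus-$\cS$ dichotomy by analysing the D-game. For the S-game, Staller's first move is $s_1' = v$, the unique vertex lying in every block of $G$. If some block is a $K_2 = \{v,w\}$, then $N_G(w) = \{v\}$ already lies in Staller's set, so $w$ is isolated. Otherwise every block is a cycle; for each cycle-block $B$ choose $a_B \in N_B(v)$ and let $c_B$ be the unique second neighbour of $a_B$ in $B$, so that $N_G(a_B) = \{v, c_B\}$. Since $G$ has at least two cycle-blocks, Dominator's reply $d_1'$ equals $c_B$ for at most one block $B$, and Staller then plays a still-free $c_B$ to isolate the corresponding $a_B$.

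For the characterization I analyse the D-game in both directions. For the reverse implication (hypothesis fails, so $G\in\cS$), I split into two sub-cases. (a) If some cycle-block $B=C_n$ has $n\ge 6$, I will apply Lemma~\ref{lem:amalgamation} with $G_1=B$: the strategy in the proof of Proposition~\ref{prp:cycles} has Staller reply to Dominator's first move in the cycle (which, up to symmetry, may be assumed to be $v_1$) by $v_4$; whatever Dominator plays next, Staller's following move ($v_2$ or $v_6$) isolates $v_3$ or $v_5$, both of which lie in $B\setminus\{v\}$. Hence the hypothesis of the Amalgamation Lemma is met. (b) Otherwise every cycle-block is a $C_5$ and there is at most one $K_2$-block. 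If $d_1 \ne v$, Staller plays $v$: any $K_2$-leaf is then isolated immediately, and otherwise at least two simultaneous one-move isolation threats remain (one $c_B$-move per cycle-block), which Dominator cannot all defend at once. If $d_1 = v$, Staller plays $u_1^{(1)}$, forcing $d_2 = u_3^{(1)}$ (otherwise $u_2^{(1)}$ is isolated instantly), then $s_2 = u_4^{(1)}$ forcing $d_3 = u_2^{(1)}$; she repeats the pattern through each $C_5$-block in turn, finishing with the $K_2$-leaf if one exists. Her moves end up covering exactly $N_G(v)$, isolating $v$.

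For the forward implication (hypothesis holds, so $G\in{\cal N}$), Dominator will play $d_1 = v$, which dominates every $K_2$-leaf and every $v$-neighbour in a cycle block, and prevents any $v$-adjacent vertex from being isolated thereafter. The remaining isolation threats for Staller are all against interior vertices: the antipode $y$ in a $C_4$-block, or $u_2, u_3$ in a $C_5$-block. Dominator plays reactively, playing the ``other'' needed neighbour whenever Staller attacks. This costs him one move per $C_4$-block (a $v$-neighbour, which also dominates $v$) and two moves per $C_5$-block (which do not touch $N(v)$). The hypothesis supplies an efficient block delivering a $v$-neighbour to his set: a single move into a $C_3$-block (which dominates $v$ and finishes the block), the already-counted defensive move in a $C_4$-block, or the second $K_2$-leaf. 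This guarantees that $v$ is dominated at the end.

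The main obstacle I anticipate is the bookkeeping in the forward direction: one must verify that Dominator's reactive strategy fits inside his $\lceil|V(G)|/2\rceil$ allotted moves and that he can always ``afford'' the extra move for $v$-domination against Staller's threat sequence. The move counts are tight, so a careful case analysis by block multiset (broken down by whether the efficient block is a $C_3$, a $C_4$, or a pair of $K_2$'s, and interleaved with several $C_5$-blocks) will be required. A secondary delicate point is sub-case (b) of the reverse direction, where one must argue rigorously that Staller's threat cascade is genuinely forced for any number of $C_5$-blocks and that her final move always completes the capture of $N_G(v)$.
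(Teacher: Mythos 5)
Your overall architecture matches the paper's: Staller wins the S-game on every star cactus with at least two blocks (your argument is the paper's), and your sub-case (b) cascade for the D-game when all cycles are $C_5$'s and at most one block is a $K_2$ is exactly the paper's argument. But there are two genuine gaps. The first is in sub-case (a). To apply Lemma~\ref{lem:amalgamation} with $G_1=B$ you need a Staller strategy on the cycle $B$ that isolates a vertex of $B\setminus\{v\}$ against \emph{every} Dominator behaviour, including Dominator playing $v$ or playing outside $B$ (a skip in the restricted game). Your reduction ``up to symmetry $d_1=v_1$'' is illegitimate: the amalgamation vertex $v$ breaks the cycle's symmetry, and nothing prevents $v$ from being $v_3$ or $v_5$, in which case one of your two threats is worthless --- isolating $v$ inside $B$ isolates nothing in $G$, since $v$ has further neighbours in the other blocks. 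Concretely, take $B=C_6$ with $d_1=v_1$ and $v=v_3$: after your move $v_4$ the only live threat is $v_5$, Dominator kills it by playing $v_6$, and then every remaining admissible target in $B\setminus\{v\}$ needs two further Staller moves, so she loses the restricted game; you cannot switch orientation, because on $C_6$ the two vertices at distance~$3$ from $v_1$ coincide. The approach is repairable (in that position Staller should open with $v_6$, double-threatening $v_5$ and $v_1$ itself --- note that isolating a vertex Dominator occupies is also a win), but the strategy must be tailored to the position of $v$, which your write-up does not do. The paper sidesteps all of this with a direct argument: if Dominator opens with $u$ or off $C$, Staller plays the vertex of $C$ at distance~$3$ from $u$, so both threatened vertices avoid $u$; if Dominator opens on $C$, Staller answers with $u$ and wins either in a second block or via the vertices at distance~$2$ from $u$ on $C$.

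The second gap is that the direction ``hypothesis holds $\Rightarrow$ Dominator wins the D-game'' is a plan, not a proof, and the obstacle you anticipate (fitting into $\lceil |V(G)|/2\rceil$ moves) is a red herring. The paper's Case~3 resolves it with disjoint pairs: in each $C_5$ block the pairs $\{u_2,u_4\}$ and $\{u_3,u_5\}$ (one vertex from each pair suffices, since $u_2$ and $u_5$ are already totally dominated by Dominator's first move $v$); in each $C_3$ or $C_4$ block the two neighbours of $v$, a single one of which totally dominates the whole block including $v$; and, failing those, the two $K_2$-leaves paired with each other. Because the pairs are disjoint, each Staller move triggers at most one response, so Dominator is never short of moves; and because the game ends only when every vertex of $G$ has been chosen, Staller cannot keep Dominator out of the ``efficient'' block without eventually playing in it herself, which hands him his response there. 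This pairing observation is precisely what replaces the case analysis by block multiset that you defer; without it (or an equivalent), that half of the characterization remains unproven.
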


\proof
Let $u$ be the vertex of the star cactus $G$ that lies in all the blocks of $G$.

Consider first the S-game played on $G$. Then Staller plays $u$ as her first move. If $G$ contains a $K_2$ block, Staller already wins with this move. If not, then $u$ is contained in at least two cycles. Hence, after the first move of Dominator there is at least one cycle in which Dominator did not play and on this cycle Staller can isolate a neighbor of $u$. Hence, if the S-game is played on a star cactus with at least two blocks, then Staller has a winning strategy.

In the rest we consider the D-game and distinguish the following cases.

\medskip\noindent
{\bf Case 1}: $G$ contains a cycle of length at least $6$.\\
Let $C$ be such a cycle. Assume first that Dominator either plays $u$ as his first move or a vertex not on $C$. Then Staller plays a vertex $v$ of $C$ with $d_C(u,v) = 3$, thus posing a double threat on the neighbors of $v$ which will enable Staller to win after her subsequent move. Suppose next that Dominator starts the game by playing a vertex $w$ of $C$, $w\ne u$. Then Staller replies with a move on $u$. If $G$ contains a $K_2$ block, Staller wins with this move. Hence assume that there is another cycle $C'$ in $G$. If Dominator replies to the move $u$ of Staller with a move on $C$, then Staller will win by isolating a neighbor of $u$ on $C'$. Otherwise, at least one of the vertices on $C$ at distance $2$ from $u$, say $x$, is not played by Dominator and hence Staller wins by playing $x$ since then the common neighbor of $x$ and $u$ becomes isolated.

\medskip\noindent
{\bf Case 2}: $G$ contains only cycles of length $5$ and at most one $K_2$ block.\\
If Dominator does not start the game on $u$, then Staller can apply the above strategy to win the game. Suppose hence that Dominator first plays $u$. Let $C$ be an arbitrary $C_5$ block. (Note that such a block exists as $G$ has at least two blocks.) Let the vertices of $C$ be $u_1 = u, u_2, u_3, u_4, u_5$ in the natural order. Then Staller plays $u_2$ threatening $u_3$. Dominator has to play $u_4$ for otherwise Staller wins. Then Staller plays $u_5$ threatening $u_4$, and then Dominator must play $u_3$. Note that Staller played on both neighbors of $u$ in $C$, and will play the next move in some other block. So she can apply the same strategy for every $C_5$ block. Afterwards, if there is no $K_2$ block, then $u$ becomes isolated. Otherwise Staller plays the leaf of the unique $K_2$ block, isolating $u$ again.

\medskip\noindent
{\bf Case 3}: $G$ contains only cycles of length at most $5$, and either at least two $K_2$ blocks, or at least one $C_3$ block, or at least one $C_4$ block.\\
In this case we are going to prove that Dominator has a winning strategy. To do so, he first plays on $u$. Let $C$ be an arbitrary $C_5$ with its vertices $u_1 = u, u_2, u_3, u_4, u_5$. Then the strategy of Dominator on $C$ is that if Staller plays a vertex from $\{u_2,u_4\}$, then Dominator answers with the other vertex from the pair, and does the same for the pair $\{u_3,u_5\}$. Note that applying this strategy Dominator ensures that all the vertices of $V(C)\setminus\{u\}$ are totally dominated. Dominator applies this strategy on every $C_5$ block. In addition, if Staller plays a vertex of a $C_4$ or a $C_3$ block, then Dominator can reply on one of the two neighbors of $u$ in the same block. Doing so, he totally dominates the whole block (including $u$). Suppose finally that there are no $C_3$ or $C_4$ blocks. Then $G$ contains at least two $K_2$ blocks. The leaves of these blocks are already totally dominated by the first move of Dominator. Whatever Staller does, Dominator can totally dominate $u$ by playing one of these leaves.   In summary, every vertex of $G$ will be totally dominated by Dominator's moves.
\qed

\begin{lemma}
\label{lem:remove-C4}
If $C=C_4$ is an end-block of a connected graph $G$, then the outcome of the S-game on $G$ is the same as on $G\setminus C$.
\end{lemma}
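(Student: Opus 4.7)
I plan to prove both implications separately: (a) Dominator wins the S-game on $H := G\setminus C$ implies Dominator wins the S-game on $G$, and (b) the analogous statement with Staller. The case $G=C$ is trivial (then $H=\emptyset$ and $C=C_4$ are both in $\cD$), so assume $G\neq C$. Then $C$ has a unique cut vertex $v$, and I label the remaining three vertices of $C$ as $a,b,c$ so that $C$ is the cycle $v\text{-}a\text{-}b\text{-}c\text{-}v$. The key local features I will exploit are $N_G(a)=N_G(c)=\{v,b\}$, $N_G(b)=\{a,c\}$, and, for every $u\in V(H)$, $N_H(u)\subseteq N_G(u)\subseteq N_H(u)\cup\{v\}$ (the two sides coincide unless $u$ is a $G$-neighbour of $v$).

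For direction (a), Dominator runs his winning $H$-strategy $\tau$ in parallel with the two pairings $\{a,c\}$ and $\{b,v\}$ on $V(C)$: if Staller plays in $V(H)$ he answers per $\tau$ in $V(H)$, and if she plays in $V(C)$ he plays the paired vertex in $V(C)$. Because $v\notin V(H)$, $\tau$ never refers to $v$ and the two regimes do not clash. The moves in $V(H)$ therefore form a genuine alternating S-game on $H$ in which $\tau$ wins, so Dominator's $V(H)$-plays totally dominate $V(H)$ in $G$. The two pairings guarantee Dominator holds at least one vertex of $\{a,c\}$ and at least one vertex of $\{b,v\}$; jointly these dominate every vertex of $V(C)$ (namely $b$ via $\{a,c\}$, and $a,c,v$ via $\{b,v\}\cup\{a,c\}$).

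For direction (b), Staller opens with $s_1'=v$. If Dominator's reply is anything other than $b$, Staller plays $s_2'=b$ and wins immediately, since then $\{v,b\}=N_G(a)=N_G(c)\subseteq S$ isolates $a$ (and $c$). So Dominator is forced to answer $d_1'=b$. From round $2$ onwards, Staller plays her winning $H$-strategy $\sigma$ using only vertices of $V(H)$; Dominator's remaining $V(C)$-moves can only be $a$ or $c$, and from the virtual $H$-game's viewpoint these are skipped moves by Dominator, which by the No-Skip Lemma do not hurt $\sigma$. Thus $\sigma$ isolates some $u^*\in V(H)$, i.e.\ $N_H(u^*)\subseteq S$; combined with $v\in S$ from the opening move, this yields $N_G(u^*)\subseteq S$ regardless of whether $u^*$ is a $G$-neighbour of $v$, and Staller isolates $u^*$ in $G$.

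The hard part is precisely why the opening $s_1'=v$ is needed in (b): if Staller's only available winning target in $H$ happens to lie in $N_G(v)\cap V(H)$, then $u^*$ has the extra neighbour $v$ in $G$ that is absent in $H$, and playing $v$ at the outset is what secures the needed $v\in S$ to complete the isolation. The forced reply $d_1'=b$, extracted from the immediate $\{v,b\}$-threat against both $a$ and $c$, is what guarantees this opening is not wasted. A minor check in (a) is that Dominator's $\{b,v\}$ pairing is consistent with $\tau$, which is automatic because $v\notin V(H)$ and so $\tau$ never asks Dominator to play $v$.
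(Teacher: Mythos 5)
Your proof is correct and takes essentially the same approach as the paper's: for the Dominator direction you combine his winning S-game strategy on $G\setminus C$ with the opposite-vertex pairing on the $4$-cycle (which is exactly Dominator's winning $C_4$ strategy), and for the Staller direction you open at the cut vertex $v$, force the reply $b$, and then run her winning strategy on $G\setminus C$. The paper's proof is identical in structure, just terser --- it leaves implicit the No-Skip and isolation-transfer details that you spell out.
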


\proof
If $G=C_4$, then the assertion holds because both $C_4$ and $G\setminus C_4 = \emptyset$ are ${\cal D}$ graphs. We may thus assume in the rest that $C$ contains a (unique) vertex of degree more than $2$. Set $G' = G\setminus C$. Suppose first that Dominator has a winning strategy for the S-game on $G'$. Then the strategy of Dominator is to follow Staller on $C$ as well as on $G'$ using his winning strategies, gives a winning strategy of Dominator on $G$. In the case that Staller has a winning strategy on $G'$, then she starts the S-game on $G$ by playing $u$. This forces Dominator to play the vertex of $C$ opposite to $u$. Afterwards Staller can follow her optimal strategy on $G'$ to win the game on $G$ as well. This is possible since $C$ is separated from $G'$ after the first move of Staller.
\qed

Note that Lemma~\ref{lem:remove-C4} remains valid if some moves of the game were already played, it is Staller's turn, and no vertex of $C_4$ has already been played.

\begin{lemma}
\label{lem:no-pendant-C4}
If $G$ is a non-empty cactus that contains no end-block $C_4$, then Staller has a winning strategy in the S-game on $G$.
\end{lemma}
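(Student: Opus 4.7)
The plan is to fix any end-block $B$ of $G$ and exhibit a short local strategy for Staller that isolates a vertex of $B$, independent of Dominator's moves elsewhere in $G$. The trivial case $G = K_1$ belongs to $\cS$ by definition, so we may assume $G$ has edges, hence at least one end-block $B$. By hypothesis $B \ne C_4$, so $B$ is either $K_2$, $C_3$, or $C_k$ for some $k \ge 5$. The structural fact that makes the localisation go through is that, since $B$ is an end-block of the cactus $G$, every non-cut-vertex of $B$ has all of its $G$-neighbours inside $V(B)$; in particular, in a cycle end-block only the cut-vertex can have $G$-degree larger than $2$.

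If $B = K_2$, let $v$ be its leaf and $u$ its other vertex; Staller opens with $u$, and since $N_G(v) = \{u\}$ the vertex $v$ is isolated at once. If $B = C_3$ on vertices $u, u_2, u_3$ with $u$ the cut-vertex (or any vertex if $G = C_3$), Staller opens with $u$. Whichever of $u_2, u_3$ Dominator does not claim on his first move, Staller claims on her second move (if Dominator played outside $B$, Staller picks either one). The remaining vertex of $\{u_2, u_3\}$ then has both of its $G$-neighbours in Staller's set and none in Dominator's set, so it cannot be totally dominated.

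The main case is $B = C_k$ with $k \ge 5$. Label the cycle $v_1 v_2 \cdots v_k v_1$ with $v_1$ the cut-vertex (or arbitrary if $G = C_k$). Staller plays $v_3$, which creates two independent threats: she can play $v_1$ to isolate $v_2$ (whose open neighbourhood is $\{v_1, v_3\}$), or $v_5$ to isolate $v_4$ (whose open neighbourhood is $\{v_3, v_5\}$). Crucially, $v_5 \ne v_1$ because $k \ge 5$, so these completions use distinct vertices. A short case analysis on Dominator's reply $d_1'$ then finishes the argument: if $d_1' \notin \{v_1, v_5\}$ Staller freely picks either completion; if $d_1' \in \{v_1, v_2\}$ she plays $v_5$ and wins through $v_4$; if $d_1' \in \{v_4, v_5\}$ she plays $v_1$ and wins through $v_2$. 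Even the tempting option of Dominator playing $v_2$ or $v_4$ himself does not help him, because these vertices still need a neighbour in Dominator's set to be totally dominated, and Staller's completion denies any such neighbour.

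The only real obstacle is the bookkeeping in the cycle case, namely verifying that at least one threat always survives Dominator's reply. This is transparent: the two open neighbourhoods $\{v_1, v_3\}$ and $\{v_3, v_5\}$ overlap only at $v_3$, which Staller already owns, so a single reply can disable at most one threat. This is also where the $C_4$ exclusion enters in an essential way: in a $C_4$ end-block one would have $v_5 = v_1$, collapsing the two threats into a single one that Dominator dispatches with his single reply. Since the three cases exhaust the possibilities for $B$, Staller always has a winning S-game strategy, and the lemma follows.
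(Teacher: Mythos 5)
Your proof is correct and follows essentially the same route as the paper's: isolate a non-$C_4$ end-block and have Staller create a double threat inside it (leaf case, then cycle case), with the $C_4$ exclusion entering exactly because the two completion vertices would coincide. The only cosmetic difference is that for long end-cycles the paper has Staller pivot on the cut vertex $u$ itself (threatening its two cycle-neighbours via $v_3$ and $v_{k-1}$), whereas you pivot on $v_3$; both yield the same two-threats-one-reply count.
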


\proof
If $G=K_1$ the assertion is clear. Suppose next that $\delta(G) = 1$. Then Staller plays the support vertex of a leaf to win the game. The last case is when $\delta(G)\ge 2$ which is equivalent to the fact that every end-block of $G$ is a cycle. By the assumption, none of these cycles is $C_4$. If $G=C_n$, $n\ne 4$, then by Proposition~\ref{prp:cycles} Staller wins. Assume finally that $G$ has more than one cycle and let $C$ be such an end-cycle with $u$ the (unique) vertex of $C$ of degree more than $2$. Then Staller plays $u$ as her first move and in this way makes a double threat on its neighbors on the cycle. Hence, Staller wins again.
\qed

Recall from Proposition~\ref{prp:cycles} that $C_3$ is the only cactus graph with a single block that is an ${\cal N}$ graph. Hence, in view of Lemma~\ref{lem:star-cactus}, we say that a cactus graph $G$ is an {\em ${\cal N}$-star cactus} if either $G=C_3$, or $G$ has at least two blocks, cycles are of length at most $5$, and contains a $3$-cycle, a $4$-cycle, or two $K_2$ blocks. The main result of this section now reads as follows.

\begin{theorem}
\label{thm:cactus}
Let $G$ be a cactus with at least two blocks. Then
\begin{enumerate}
\item[(i)] $G$ is ${\cal D}$ if and only if $V(G)$ can be partitioned into $4$-sets, each inducing a $C_4$;
\item[(ii)] $G$ is ${\cal N}$ if and only if there exists a sequence of end-blocks $C_4$ such that iteratively removing them yields an ${\cal N}$-star cactus.
\end{enumerate}
Consequently, $G$ is ${\cal S}$ in all the other cases.
\end{theorem}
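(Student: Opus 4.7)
Part (i). The ``if'' direction is immediate from Proposition~\ref{prp:cycles} ($C_4 \in {\cal D}$) combined with Corollary~\ref{cor:win-in-the-other-game}(ii). For the ``only if'' direction, I plan to induct on $|V(G)|$. Assuming $G \in {\cal D}$, Dominator wins the S-game on $G$, so by the contrapositive of Lemma~\ref{lem:no-pendant-C4} the graph $G$ has an end-block $C \cong C_4$. Setting $G' = G \setminus C$, Lemma~\ref{lem:remove-C4} yields that Dominator wins the S-game on $G'$ as well, and Corollary~\ref{cor:win-in-the-other-game}(i) upgrades this to $G' \in {\cal D}$. Applying Table~\ref{table:table_union} to the (possibly disconnected) graph $G'$ component-wise and invoking the inductive hypothesis on each component, one obtains a partition of $V(G')$ into $4$-sets each inducing $C_4$; adjoining $V(C)$ completes the partition of $V(G)$.

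Part (ii), ``if'' direction. I induct on the length $m$ of the reduction $G = G_0, \ldots, G_m = H$. The base case $m = 0$ holds by Lemma~\ref{lem:star-cactus} (or by Proposition~\ref{prp:cycles} when $G = C_3$). For $m \ge 1$, the inductive hypothesis yields $G_1 = G \setminus C_0 \in {\cal N}$, and Lemma~\ref{lem:remove-C4} then gives that Staller wins the S-game on $G$. For the D-game on $G$, Dominator's strategy will be: first play the cut vertex $u$ of $C_0$, then respond to Staller's moves in $V(C_0) \setminus \{u\}$ by playing a neighbor of $u$ in $C_0$ (which guarantees a total dominating set of $V(C_0)$ inside his $C_0$-moves), and respond to her moves in $V(G_1)$ via his winning D-game strategy on $G_1$; validity of this composition is supported by the No-Skip Lemma (Lemma~\ref{lem:no-skip}), since the initial move $u \notin V(G_1)$ can be viewed as a skipped move in the imagined $G_1$-game.

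Part (ii), ``only if'' direction. Assume $G \in {\cal N}$. If $G$ is already an ${\cal N}$-star cactus, the empty sequence works. Otherwise, I first show that $G$ has an end-block $C_4$: supposing not, every end-block of $G$ is in $\{K_2, C_3\} \cup \{C_k : k \ge 5\}$, and I split into two cases. When $G$ is a star cactus, every block is an end-block (so $G$ has no $C_4$ block), and Lemma~\ref{lem:star-cactus} together with the assumption that $G$ is not an ${\cal N}$-star cactus forces $G \in {\cal S}$, contradicting $G \in {\cal N}$. When $G$ is not a star cactus, its block-cut tree has diameter at least $4$, producing two end-blocks $B_1, B_2$ with distinct cut vertices; I construct an explicit Staller D-game winning strategy by playing (depending on Dominator's first move) the cut vertex of $B_1$ or of $B_2$ and completing an isolation inside that block via a type-by-type analysis of $B_i \in \{K_2, C_3\} \cup \{C_k : k \ge 5\}$ (the cycle cases mirroring Proposition~\ref{prp:cycles}), again contradicting $G \in {\cal N}$. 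Having established that $G$ has an end-block $C \cong C_4$, I choose such a $C$ with $G \setminus C \in {\cal N}$ and apply induction on $|V(G)|$, prepending $C$ to the reduction of $G \setminus C$ to an ${\cal N}$-star cactus.

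The main obstacle is the last step of the necessary direction: showing that among the end-block $C_4$'s of $G$ there is one whose removal preserves ${\cal N}$. This is a D-game analogue of Lemma~\ref{lem:remove-C4}, which only covers the S-game; I plan to establish it by analysing Dominator's winning D-game strategy on $G$ to identify an end-block $C_4$ whose contribution he handles independently from the rest of $G$, so that the restriction of his strategy to $V(G) \setminus V(C)$ remains winning on $G \setminus C$.
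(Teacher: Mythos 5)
Part (i) of your proposal is essentially the paper's own argument (Lemmas~\ref{lem:remove-C4} and~\ref{lem:no-pendant-C4}), recast as an induction, and is fine apart from the small point that single-block components of $G'$ are not covered by the inductive hypothesis and need Proposition~\ref{prp:cycles} directly. Both halves of part (ii), however, have genuine problems. In the ``if'' direction your D-game strategy for Dominator is simply wrong, and the appeal to the No-Skip Lemma is backwards: if Dominator's first move is the cut vertex $u$ of $C_0$, then in the imagined game on $G_1$ it is Staller who effectively moves first, so it is \emph{Dominator} who is skipping, and Lemma~\ref{lem:no-skip} says skipping can only hurt him; since $G_1$ is ${\cal N}$ (not ${\cal D}$), Staller wins that imagined S-game. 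Concretely, let $G$ consist of a $4$-cycle $uabc$, a path $xyz$, and the edge $ux$. Removing the end-block $C_4$ leaves $K_{1,2}$, an ${\cal N}$-star cactus, so $G$ falls under your inductive step. Your Dominator plays $u$; Staller answers $y$ and wins immediately, because $N_G(z)=\{y\}$. In fact $y$ is Dominator's only winning first move on this $G$, which shows his first move must lie inside the core $H$. This is exactly what the paper does: Dominator imagines the game on the disjoint union of the removed $C_4$'s and $H$, opens with his winning first move of the D-game on $H$ (which is ${\cal N}$ by Lemma~\ref{lem:star-cactus}), answers Staller inside each $C_4$ using $C_4\in{\cal D}$, and uses that $G$ has a superset of the edges of that disjoint union.

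In the ``only if'' direction, the step you yourself flag as the main obstacle --- producing an end-block $C_4$ whose removal preserves ${\cal N}$, i.e.\ a D-game analogue of Lemma~\ref{lem:remove-C4} --- is the entire content of this direction, and ``analysing Dominator's winning strategy to identify such a block'' is a plan, not a proof. The paper shows that no such lemma is needed, by making the reduction depend on Dominator's first move. Arguing the contrapositive, consider the D-game, let $u$ be Dominator's arbitrary first move, and only then iteratively remove the end-blocks $C_4$ that do \emph{not} contain $u$. Since no vertex of those blocks has been played and it is Staller's turn, the remark following Lemma~\ref{lem:remove-C4} applies, so the game on $G$ has the same outcome as the game on the reduced graph $H$, in which $u$ has already been played. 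By hypothesis $H$ is not an ${\cal N}$-star cactus, and $H\ne C_4$. If $H$ is a star cactus, Staller wins by Lemma~\ref{lem:star-cactus} no matter what Dominator's first move was; otherwise $H$ has two disjoint end-blocks, one of which avoids $u$ and (by the construction of $H$) is not a $C_4$, and Staller wins by playing its vertex of largest degree, creating a double threat. As this defeats every first move $u$, the graph $G$ is ${\cal S}$ rather than ${\cal N}$. This move-dependent reduction is the idea your outline is missing, and without it (or the counterexample-proof replacement of your Dominator strategy in the ``if'' direction) the proposal does not establish part (ii).
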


\proof
(i) If $V(G)$ can be partitioned into $4$-sets, each inducing a $C_4$, then $G$ is a ${\cal D}$ graph by Corollary~\ref{cor:win-in-the-other-game}(ii). Conversely, assume that $V(G)$ can not be covered by vertices of disjoint $C_4$. Let $H$ be a graph obtained by iteratively removing end-blocks $C_4$ of $G$. Then $H$ is not the empty graph and by Lemma~\ref{lem:remove-C4}, the outcome of the S-game on $H$ is the same as the outcome on $G$. But then Staller wins the game on $H$ by Lemma~\ref{lem:no-pendant-C4}.

(ii) Assume first that there exists a sequence of end-blocks $C_4$ such that iteratively removing them yields an ${\cal N}$-star cactus, denote it with $H$. Then Dominator considers the game as to be played on the disjoint union of several $C_4$s and $H$. Each $C_4$ is ${\cal D}$ and $H$ is ${\cal N}$ by Proposition~\ref{prp:cycles} and Lemma~\ref{lem:star-cactus}. Hence by Corollary~\ref{cor:win-in-the-other-game}(ii), Dominator can win playing first. On the other hand, if Staller plays first, then by Lemma~\ref{lem:remove-C4}, the outcome of the S-game on $H$ is the same as on $G$.  Hence Staller wins on $G$.

Conversely, suppose that there does not exist a sequence of end-blocks $C_4$ such that iteratively removing them yields an ${\cal N}$-star cactus. Consider the D-game played on $G$ and let $u$ be the first move of Dominator. Let $H$ be a graph obtained from $G$ by iteratively removing pendant blocks $C_4$ that do not contain $u$ until no such end-block remains. Note that $H\ne C_4$, for otherwise $G$ can be covered with disjoint $C_4$s and we are in (i). Then the outcome of the game on $H$ is the same as on $G$ by Lemma~\ref{lem:remove-C4}. If $H$ is not a star cactus, then $H$ contains at least two disjoint end-blocks. One of these blocks does not contain $u$ and this block, say $B$, is not a $C_4$. Staller can play on the vertex $x$ of highest degree in $B$. If $B=K_2$ then Staller wins right away, otherwise she threatens both neighbors of $x$ in $B$, so she can win after the next move of Dominator. Suppose next that $H$ is a star cactus. Then it is neither an ${\cal N}$-star cactus nor $C_4$. Now, whatever Dominator plays as his first move, Staller has a winning strategy. Hence $G$ is an ${\cal S}$ graph.
\qed

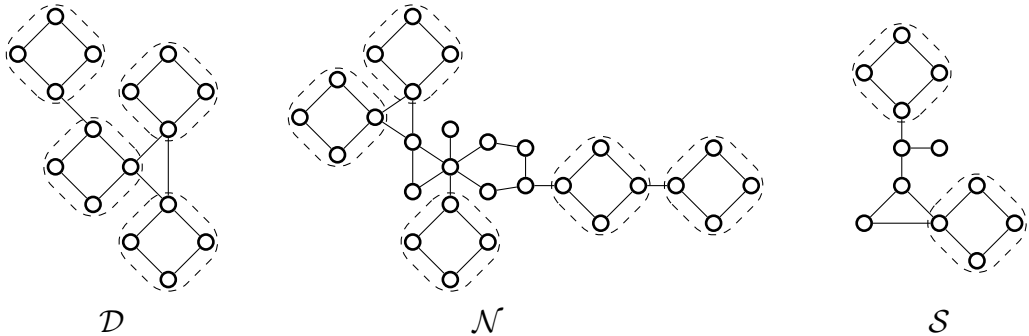
\begin{figure}[ht!]
\begin{center}
\begin{tikzpicture}

\node at (0,0){
\begin{tikzpicture}
    \node[noeud] (a1) at (0,0){};
    \node[noeud] (b1) at (-0.5,0.5){};
    \node[noeud] (c1) at (-1,0){};
    \node[noeud] (d1) at (-0.5,-0.5){};
    \node[noeud] (a2) at (-1,1){};
    \node[noeud] (b2) at (-1.5,1.5){};
    \node[noeud] (c2) at (-1,2){};
    \node[noeud] (d2) at (-0.5,1.5){};
    \node[noeud] (a3) at (0.5,0.5){};
    \node[noeud] (b3) at (0,1){};
    \node[noeud] (c3) at (0.5,1.5){};
    \node[noeud] (d3) at (1,1){};
    \node[noeud] (a4) at (0.5,-0.5){};
    \node[noeud] (b4) at (0,-1){};
    \node[noeud] (c4) at (0.5,-1.5){};
    \node[noeud] (d4) at (1,-1){};

\draw (a1)-- (b1) -- (c1) -- (d1) -- (a1);
\draw (a2)-- (b2) -- (c2) -- (d2) -- (a2);
\draw (a3)-- (b3) -- (c3) -- (d3) -- (a3);
\draw (a4)-- (b4) -- (c4) -- (d4) -- (a4);
\draw (a1) -- (a3);
\draw (a1) -- (a4);
\draw (a4) -- (a3);
\draw (b1) -- (a2);

\draw[dashed, rounded corners=10pt] (0.3,0) -- (-0.5,0.8) -- (-1.3,0) -- (-0.5, -0.8) -- cycle;
\draw[dashed, rounded corners=10pt] (-1,0.7) -- (-1.8,1.5) -- (-1,2.3) -- (-0.2, 1.5) -- cycle;
\draw[dashed, rounded corners=10pt] (0.5,0.2) -- (-0.3,1) -- (0.5,1.8) -- (1.3, 1) -- cycle;
\draw[dashed, rounded corners=10pt] (0.5,-0.2) -- (-0.3,-1) -- (0.5,-1.8) -- (1.3, -1) -- cycle;
\end{tikzpicture}
};

\node at (0,-2.3){$\cal D$};

\node at (5.5,0){
\begin{tikzpicture}
    \node[noeud] (cent) at (0,0){};
    \node[noeud] (a) at (0,0.5){};
    \node[noeud] (b) at (-0.5,0.33){};
    \node[noeud] (c) at (-0.5,-0.33){};
    \node[noeud] (d) at (0,-0.5){};
    \node[noeud] (e) at (0.5,0.33){};
    \node[noeud] (f) at (0.5,-0.33){};
    \node[noeud] (t1) at (-1,0.66){};
    \node[noeud] (t2) at (-0.5,1){};
    \node[noeud] (h1) at (1,0.25){};
    \node[noeud] (h2) at (1,-0.25){};
    \node[noeud] (v1) at (-1.5,1.16){};
    \node[noeud] (v2) at (-2,0.66){};
    \node[noeud] (v3) at (-1.5,0.16){};
    \node[noeud] (w1) at (0,1.5){};
    \node[noeud] (w2) at (-0.5,2){};
    \node[noeud] (w3) at (-1,1.5){};
    \node[noeud] (x1) at (-0.5,-1){};
    \node[noeud] (x2) at (0,-1.5){};
    \node[noeud] (x3) at (0.5,-1){};
    \node[noeud] (y1) at (1.5,-0.25){};
    \node[noeud] (y2) at (2,-0.75){};
    \node[noeud] (y3) at (2.5,-0.25){};
    \node[noeud] (y4) at (2,0.25){};
    \node[noeud] (z1) at (3,-0.25){};
    \node[noeud] (z2) at (3.5,-0.75){};
    \node[noeud] (z3) at (4,-0.25){};
    \node[noeud] (z4) at (3.5,0.25){};

\draw (cent) -- (a);
\draw (cent) -- (b);
\draw (cent) -- (c);
\draw (cent) -- (d);
\draw (cent) -- (e);
\draw (cent) -- (f);
\draw (b)--(c);
\draw (b)--(t1);
\draw (b)--(t2);
\draw (t1)--(t2);
\draw (f)--(h2)--(h1)--(e);
\draw (t1)--(v1)--(v2)--(v3)--(t1);
\draw (t2)--(w1)--(w2)--(w3)--(t2);
\draw (d)--(x1)--(x2)--(x3)--(d);
\draw (y1)--(y2)--(y3)--(y4)--(y1);
\draw (z1)--(z2)--(z3)--(z4)--(z1);
\draw (h2) -- (y1);
\draw (y3) -- (z1);

\draw[dashed, rounded corners=10pt] (-0.7,0.66) -- (-1.5,1.46) -- (-2.3,0.66) -- (-1.5,-0.14) -- cycle;
\draw[dashed, rounded corners=10pt] (-0.5,0.7) -- (0.3,1.5) -- (-0.5,2.3) -- (-1.3, 1.5) -- cycle;
\draw[dashed, rounded corners=10pt] (0,-0.2) -- (-0.8,-1) -- (0,-1.8) -- (0.8, -1) -- cycle;
\draw[dashed, rounded corners=10pt] (1.2,-0.25) -- (2,-1.05) -- (2.8,-0.25) -- (2,0.55) -- cycle;
\draw[dashed, rounded corners=10pt] (2.7,-0.25) -- (3.5,-1.05) -- (4.3,-0.25) -- (3.5,0.55) -- cycle;

\end{tikzpicture}
};

\node at (5,-2.3){$\cal N$};

\node at (11,0){
\begin{tikzpicture}

    \node[noeud] (a) at (0,0){};
    \node[noeud] (b) at (0.5,0){};
    \node[noeud] (c) at (0,-0.5){};
    \node[noeud] (d) at (-0.5,-1){};
    \node[noeud] (f1) at (0.5,-1){};
    \node[noeud] (f2) at (1,-0.5){};
    \node[noeud] (f3) at (1.5,-1){};
    \node[noeud] (f4) at (1,-1.5){};
    \node[noeud] (e1) at (0,0.5){};
    \node[noeud] (e2) at (0.5,1){};
    \node[noeud] (e3) at (0,1.5){};
    \node[noeud] (e4) at (-0.5,1){};

\draw (e1) -- (e2) -- (e3) -- (e4) -- (e1) -- (a) -- (c) -- (d) -- (f1)	-- (f2) -- (f3) -- (f4) -- (f1) -- (c);
\draw (a) -- (b);

\draw[dashed, rounded corners=10pt] (0,0.2) -- (0.8,1) -- (0,1.8) -- (-0.8, 1) -- cycle;
\draw[dashed, rounded corners=10pt] (0.2,-1) -- (1,-0.2) -- (1.8,-1) -- (1,-1.8) -- cycle;
\end{tikzpicture}
};

\node at (11,-2.3){$\cal S$};

\end{tikzpicture}
\end{center}
\caption{Examples for each type of cacti}
\label{fig:ex_cacti}
\end{figure}

In~\cite{gledel-2018+} it is proved that if $T$ is a tree, then the MBD game on $T$ is ${\cal D}$ if $T$ has a perfect matching, it is ${\cal N}$ if by iteratively removing pendant $P_2$ from $T$ a star is obtained, and it is ${\cal S}$ otherwise.  Hence Theorem~\ref{thm:cactus} is a result parallel to this, where $4$-cycles play the role of $P_2$s and ${\cal N}$-star cacti the role of stars. It is also interesting to note that in the case of the MBD game $P_2$ is the smallest ${\cal D}$ graph while $C_4$ is the smallest ${\mathcal D}$ graph for the MBTD game.

For trees, Theorem~\ref{thm:cactus} reduces to:

\begin{corollary}
\label{cor:trees}
If $T$ is a tree, then the MBTD game is ${\cal N}$ if $T = K_{1,n}$, $n\ge 2$, otherwise the game is ${\cal S}$.
\end{corollary}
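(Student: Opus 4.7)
The plan is to derive the corollary as a direct specialization of Theorem~\ref{thm:cactus}, after handling the small cases not covered there, using the observation that a tree is precisely a cactus in which every block is a $K_2$ (so in particular it contains no cycle and no $C_4$ end-block).

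First I would dispose of the trees with at most one block, namely $K_1$ and $K_2$. We already noted $K_1\in{\cal S}$ in the introduction. For $K_2$, whoever moves first claims one vertex, leaving Dominator with at most one vertex; but any total dominating set of $K_2$ must contain both vertices, so Dominator cannot win either game and $K_2\in{\cal S}$. These two cases fall under the ``otherwise'' clause, since they are not of the form $K_{1,n}$ with $n\ge 2$.

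Next I would let $T$ be a tree with at least two blocks and apply Theorem~\ref{thm:cactus}. Because $T$ has no cycle at all, $V(T)$ cannot be partitioned into $4$-sets each inducing a $C_4$, so by part~(i) of the theorem $T$ is not ${\cal D}$. For part~(ii), the sequence of $C_4$ end-block removals is empty, so $T\in{\cal N}$ if and only if $T$ is itself an ${\cal N}$-star cactus. Since $T\ne C_3$ and contains no $3$-cycle or $4$-cycle, the only way $T$ can be an ${\cal N}$-star cactus is to be a star cactus with at least two $K_2$ blocks. Among trees, the star cacti are exactly the stars $K_{1,n}$, and these have at least two $K_2$ blocks precisely when $n\ge 2$. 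Hence $T\in{\cal N}$ iff $T=K_{1,n}$ with $n\ge 2$, and otherwise $T\in{\cal S}$ by the last sentence of Theorem~\ref{thm:cactus}.

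Since the argument is a direct translation of Theorem~\ref{thm:cactus} under the strong simplification ``no cycles'', I do not foresee a genuine obstacle; the only care needed is to make sure that $K_1$ and $K_2$ (which are trees with fewer than two blocks, hence outside the scope of Theorem~\ref{thm:cactus}) are treated separately and end up in the ${\cal S}$ class, as the corollary's ``otherwise'' clause demands.
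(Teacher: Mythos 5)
Your proof is correct and follows exactly the route the paper intends: the paper states Corollary~\ref{cor:trees} as an immediate specialization of Theorem~\ref{thm:cactus} without writing out the reduction, and your argument is precisely that reduction, with the additional (and appropriate) care of treating $K_1$ and $K_2$ separately since they fall outside the theorem's hypothesis of at least two blocks.
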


Note that Lemma~\ref{lem_MBD-versus-MBTD} applied to Theorem~\ref{thm:cactus} yields some new insight into the MBD game played on cacti.


\section{Complexity Results}
\label{S:complexity}

In this section we prove that the problem of deciding whether a given graph $G$ is $\mathcal D$, $\mathcal S$, or $\mathcal N$ for the MBTD game is PSPACE-complete. As in the case of the parallel results for the MBD game from~\cite{gledel-2018+}, our proof uses a reduction from the POS-CNF game. This game is the two player game played on a Conjunctive Normal Form (CNF) composed of variables, $x_1, \ldots, x_n$, and of clauses $C_1, \ldots, C_m$, where all variables appear only positively. In this game, the first player, Prover, assigns variables to True and wins if the formula evaluates to True. The second player, Disprover, assigns variables to False and wins if the formula evaluates to False. Schaefer proved in 1978 that this game is PSPACE-complete~\cite{schaefer-1978}.

Recall that a graph $G$ is {\em split} if $V(G)$ can be partitioned into two sets, one inducing a clique and the other an independent set.

\begin{theorem}\label{thm:pspace_split}
Deciding the outcome of the MBTD game is PSPACE-complete on split graphs.
\end{theorem}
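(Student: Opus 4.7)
The plan is to prove both membership and hardness.

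For membership in PSPACE I would use the standard minimax-recursion argument for short Maker--Breaker games: the MBTD game on $G$ ends after at most $|V(G)|$ moves, and at any intermediate position one can check in polynomial time whether Dominator's currently selected set is a total dominating set. A depth-first search of the game tree, storing only a single game path of length $O(|V(G)|)$, therefore decides the outcome in polynomial space.

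For PSPACE-hardness I would reduce from the POS-CNF game, shown PSPACE-complete by Schaefer~\cite{schaefer-1978}, exactly as in the MBD-game reduction of Duch\^{e}ne et al.~\cite{gledel-2018+}, but with gadgets adapted to the total-domination requirement. Given an instance $\phi$ with variables $x_1,\ldots,x_n$ and clauses $C_1,\ldots,C_m$, I would construct a split graph $G_\phi$ whose clique part $K$ contains a \emph{variable vertex} $v_i$ for every $x_i$, together with a small number of auxiliary \emph{anchor} vertices, and whose independent part $I$ contains a \emph{clause vertex} $c_j$ for every $C_j$, with $v_ic_j\in E(G_\phi)$ iff $x_i$ appears in $C_j$. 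The intended dictionary is that Dominator playing $v_i$ encodes Prover setting $x_i$ to \textbf{true}, while Staller playing $v_i$ encodes Disprover setting $x_i$ to \textbf{false}; after the variable vertices are exhausted, $c_j$ is totally dominated by Dominator's set iff clause $C_j$ is satisfied. To anchor self-domination inside the clique, the auxiliary vertices in $K$ would be attached so that any two distinct clique vertices Dominator plays are mutually totally dominated. To force both players to play in $K$, I would attach to each $v_i$ a small independent-set gadget whose only legal total-domination witnesses lie in $K$, calibrated so that any move by Staller outside $K$, or any move by Dominator that fails to respond to Staller's last variable move, gives Staller an immediate isolation threat.

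The correctness argument would then proceed in two directions. If Prover wins $\phi$, Dominator plays on variable vertices using Prover's strategy on the images of Staller's moves; the anchor vertices guarantee total self-domination, the forcing gadgets guarantee that both players are confined to $K$, and the image of Prover's winning play in $K$ hits every $c_j$. Conversely, from a winning strategy of Dominator for the MBTD game on $G_\phi$ one extracts, by restricting to the variable vertices, a winning strategy for Prover. Finally one verifies that $G_\phi$ is split and of polynomial size, and (by inserting one or two dummy variables in $\phi$ if necessary) that the parity of the number of moves matches the first-player convention of POS-CNF so that the D-game (or the S-game, for a companion reduction) corresponds to Prover playing first.

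The main obstacle is the design of the forcing gadgetry under the split-graph constraint. The difficulty specific to MBTD, absent from the MBD reduction, is the self-neighbor requirement: the anchor vertices must themselves be totally dominated by Dominator's eventual set, and Staller must be prevented from isolating them by a short sequence of auxiliary moves. Balancing this against the constraint that $I$ remains independent, so that no clause-vertex gadget can help Dominator dominate $K$, is the delicate part of the construction; everything else is then a routine verification of the two implications above.
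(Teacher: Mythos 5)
Your reduction skeleton is the same as the paper's: a split graph whose clique consists of variable vertices, whose independent set consists of clause vertices, with incidence edges, and the dictionary ``Dominator plays $u_i$ $=$ Prover sets $x_i$ true''. But as written the proposal has a genuine gap: its central technical step --- the ``anchor'' vertices and the ``forcing gadgets'' that are supposed to confine both players to the clique and secure total domination of the clique itself --- is never constructed, and you explicitly flag it as the unresolved delicate part. A proof that defers its hardest step to unspecified gadgetry is not a proof.

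The good news is that the gap is illusory: the paper shows the bare incidence split graph already works, with no gadgets at all. Two observations close it. First, self-domination of the clique is automatic: as soon as Dominator has played \emph{two} clique vertices, every clique vertex (including those two) has a played neighbour; padding the formula with dummy variables when $n<4$ guarantees Dominator gets at least two clique moves. Second, nobody needs to be ``forced'' into $K$: the imitating player runs an imagination strategy, and whenever the opponent plays a clause vertex he or she simply pretends an arbitrary unplayed variable vertex was chosen --- by the No-Skip Lemma this reinterpretation can only help the imitator. Your proposed independent-set gadgets attached to each variable vertex are thus not only unnecessary but risky, since every new independent vertex is a fresh isolation target for Staller and would have to be re-analysed. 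Two smaller points: your parity worry is moot, because the imitation strategies work verbatim whether Prover or Disprover moves first (which is exactly what lets the paper conclude PSPACE-completeness for deciding $\mathcal{D}$, $\mathcal{S}$ and $\mathcal{N}$ alike); and your converse direction (Dominator wins $\Rightarrow$ Prover wins) is fine but is more naturally argued, as in the paper, in the contrapositive form ``Disprover wins $\Rightarrow$ Staller wins'' by having Staller imitate Disprover to occupy the entire neighbourhood of an unsatisfied clause vertex.
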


\proof
As the MBTD game is a combinatorial game which ends after a finite number of moves, it is in PSPACE. We will now prove that it is PSPACE-hard.

Let $(x_i)_{1 \leq i \leq n}$, $(C_j)_{1 \leq j \leq m}$, be an instance of POS-CNF. Let $G$ be the split graph on the set of vertices $V=\{u_i:\ 1 \leq i \leq n\} \cup \{v_j:\ 1 \leq j \leq m\}$, where the vertices $u_i$ form a clique, the vertices $v_j$ form an independent set and two vertices $u_i$ and $v_j$ form an edge if and only if $x_i$ is a variable of the clause $C_j$. The obtained graph is clearly a split graph. Figure~\ref{fig:pspace}(a) illustrates an example of this construction.

We will now prove that \Dom wins the MBTD game on $G$ if and only if Prover wins the POS-CNF game. Assume that Prover has a winning strategy on the POS-CNF game. In this case, \Dom can win the MBTD game by using the following strategy. Each time Prover's assigns a variable $x_i$ to True, \Dom plays the vertex $u_i$. Each time \St plays on a vertex $u_{i'}$, \Dom plays as if Disprover assigned the vertex $x_{i'}$ to False. If \St plays on a vertex $v_j$, then \Dom plays as if she played on an arbitrary vertex $u_{i''}$.
Following his strategy for the POS-CNF game, Prover is able to satisfy all the clauses, so by imitating his strategy \Dom\ is able to totally dominate the vertices $v_j$. Since the vertices $u_i$ form a clique, playing twice in the clique totally dominates it, and so \Dom\ has a winning strategy on $G$. (If $n < 4$, we can assume that there are $4-n$ more variables that don't appear in any clauses and this does not change the outcome of the game.)

Assume now that Disprover has a winning strategy on the POS-CNF game. We will demonstrate that in this case, the following is a winning strategy for \St on the MBTD game on $G$. Each time Disprover assigns a variable $x_i$ to False, \St plays on the vertex $u_i$. Each time \Dom plays on a vertex $u_{i'}$, she follows Disprover's strategy in the case where Prover assigned the variable $x_{i'}$ to True. And each time \Dom plays on a vertex $v_j$, \St plays as if he played on an arbitrary vertex $u_{i''}$. Since Disprover has a winning strategy, she can assign each variable of some clause $C_j$ to False and, by imitating this strategy, \St can play on every neighbour of the vertex $v_j$, thus keeping \Dom from totally dominating it.

Note that these strategies work both in the case when Prover starts and in the case when Disprover starts. 
\qed

\begin{figure}[ht!]
	\centering

	\begin{tikzpicture}
	\node at (-3,0){
		\begin{tikzpicture}
		\node[noeud] (x1) at (0,0){};
		\node[noeud] (x2) at (0,-1){};
		\node[noeud] (x3) at (0,-2){};
		\node[noeud] (x4) at (0,-3){};
		\node[noeud] (x5) at (0,-4){};
		
		\node[left=3pt] at (x1){$u_1$};
		\node[left=3pt] at (x2){$u_2$};
		\node[left=3pt] at (x3){$u_3$};
		\node[left=3pt] at (x4){$u_4$};
		\node[left=3pt] at (x5){$u_5$};
		
		\draw[thick] (-0.2,-2) ellipse (1.2cm and 2.5cm);
		
		\node at (-1.7,-2.7){\large $K_5$};
		
		\node[noeud] (c1) at (2.5,-0.5){};
		\node[noeud] (c2) at (2.5,-1.5){};
		\node[noeud] (c3) at (2.5,-2.5){};
		\node[noeud] (c4) at (2.5,-3.5){};

		\node[right=3pt] at (c1){$v_1$};
		\node[right=3pt] at (c2){$v_2$};
		\node[right=3pt] at (c3){$v_3$};
		\node[right=3pt] at (c4){$v_4$};
		
		\draw (c1) -- (x1);
		\draw (c1) -- (x2);
		\draw (c1) -- (x4);
		
		\draw (c2) -- (x2);
		\draw (c2) -- (x3);
		\draw (c2) -- (x5);
		
		\draw (c3) -- (x3);
		\draw (c3) -- (x4);
		\draw (c3) -- (x5);
		
		\draw (c4) -- (x1);
		\draw (c4) -- (x2);
		\draw (c4) -- (x5);		
		\end{tikzpicture}
		};
		
	\node at (3,0){
		\begin{tikzpicture}
		\node[noeud] (x1) at (0,0){};
		\node[noeud] (x2) at (0,-1){};
		\node[noeud] (x3) at (0,-2){};
		\node[noeud] (x4) at (0,-3){};
		\node[noeud] (x5) at (0,-4){};
		
		\node[left=3pt] at (x1){$u_1$};
		\node[left=3pt] at (x2){$u_2$};
		\node[left=3pt] at (x3){$u_3$};
		\node[left=3pt] at (x4){$u_4$};
		\node[left=3pt] at (x5){$u_5$};

		\node[noeud] (c1) at (2.5,-1.5){};
		\node[noeud] (c2) at (2.5,-2.5){};
		\node[noeud] (c3) at (2.5,-3.5){};
		\node[noeud] (c4) at (2.5,-4.5){};

		\node[noeud] (w2) at (2.5,0){};
		\node[noeud] (w1) at (2.5,1){};

		\node[right=3pt] at (c1){$v_1$};
		\node[right=3pt] at (c2){$v_2$};
		\node[right=3pt] at (c3){$v_3$};
		\node[right=3pt] at (c4){$v_4$};
		
		\node[right=3pt] at (w1){$w_1$};
		\node[right=3pt] at (w2){$w_2$};
		
		\draw (c1) -- (x1);
		\draw (c1) -- (x2);
		\draw (c1) -- (x4);
		
		\draw (c2) -- (x2);
		\draw (c2) -- (x3);
		\draw (c2) -- (x5);
		
		\draw (c3) -- (x3);
		\draw (c3) -- (x4);
		\draw (c3) -- (x5);
		
		\draw (c4) -- (x1);
		\draw (c4) -- (x2);
		\draw (c4) -- (x5);	
		
		\draw (w1) -- (x1);
		\draw (w1) -- (x2);
		\draw (w1) -- (x3);
		\draw (w1) -- (x4);	
		\draw (w1) -- (x5);
		
		\draw (w2) -- (x1);
		\draw (w2) -- (x2);
		\draw (w2) -- (x3);
		\draw (w2) -- (x4);	
		\draw (w2) -- (x5);
		\end{tikzpicture}
		};

		\node at (-3,-3.5) {(a)};
		\node at (3,-3.5) {(b)};		
		
		\node at (0,-4.5) {$F=(x_1 \vee x_2 \vee x_4) \wedge (x_2 \vee x_3 \vee x_5) \wedge (x_3 \vee x_4 \vee x_5) \wedge (x_1 \vee x_2 \vee x_5)$};		
		
	\end{tikzpicture}
	\caption{The graph corresponding to the formula $F$ following the constructions of the proof of Theorem~\ref{thm:pspace_split} and of the proof of Corollary~\ref{cor:pspace_bipartite}.}
	\label{fig:pspace}
\end{figure}

\begin{corollary}
\label{cor:pspace_bipartite}
Deciding the outcome of the MBTD game is PSPACE-complete on bipartite graphs.
\end{corollary}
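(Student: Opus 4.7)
The plan is to adapt the reduction used for Theorem~\ref{thm:pspace_split} by modifying the split graph into a bipartite one while preserving the outcome of the MBTD game. Starting from the split graph $G$ associated with a POS-CNF instance $(x_i), (C_j)$, I would delete all edges inside the clique $\{u_1, \ldots, u_n\}$ and adjoin two fresh vertices $w_1, w_2$ each joined to every $u_i$. The resulting graph $G'$ is bipartite with parts $\{u_1, \ldots, u_n\}$ and $\{v_1, \ldots, v_m, w_1, w_2\}$ (see Figure~\ref{fig:pspace}(b)). Since the game terminates in at most $|V(G')|$ rounds, membership in PSPACE is immediate, so it suffices to show that \Dom wins the MBTD game on $G'$ if and only if Prover wins the POS-CNF instance.

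For the forward direction, assume Prover wins. \Dom replicates Prover's strategy as in the proof of Theorem~\ref{thm:pspace_split}, with each of his moves on a $u_i$ mirroring a Prover assignment of $x_i$ to True and each \St move on a vertex different from $w_1, w_2$ interpreted exactly as before. On top of this, he gives highest priority to $\{w_1, w_2\}$: he opens the D-game with $w_1$, and whenever \St claims $w_k$, he immediately responds with $w_{3-k}$. In this way at least one of $w_1, w_2$ ends up in his set, which by itself totally dominates every $u_i$; at least one $u_i$ also lies in his set (coming from the Prover-mimicking part), totally dominating $w_1$ and $w_2$; and the $u_i$'s played following Prover's winning line totally dominate every $v_j$. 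The priority plays on $\{w_1, w_2\}$ happen entirely outside the imagined POS-CNF game, and by the No-Skip Lemma they cannot spoil Prover's winning plan.

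For the converse, assume Disprover wins. \St mimics Disprover on the $u_i$ vertices exactly as in the proof of Theorem~\ref{thm:pspace_split}, translating every \Dom move on a $v_j$ or on a $w_k$ into some arbitrary Prover assignment. Since Disprover's strategy eventually produces a clause $C_j$ whose variables have all been set to False, \St will by then have selected every $u_i$ with $x_i \in C_j$; these vertices form the whole open neighborhood of $v_j$ in $G'$, so \St isolates $v_j$ and wins. Note that \St never attempts to isolate $w_1$ or $w_2$, which would require her to claim every $u_i$, a condition Disprover's strategy never forces. The main obstacle in this plan is verifying that the forced priority responses on $\{w_1, w_2\}$ do not throw \Dom off Prover's winning line; this is precisely why I use two such vertices rather than one, so that \Dom can always reply in kind whenever \St reaches out to either.
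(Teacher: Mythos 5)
Your construction is exactly the paper's (drop the clique edges, add two vertices $w_1,w_2$ adjacent to all the $u_i$), and your treatment of the Staller/Disprover direction matches the paper's and is fine. The gap is in the Dominator direction, specifically in the instruction that \Dom \emph{opens the D-game with $w_1$}. That first move is not translated into the imagined POS-CNF game, so after Staller's reply (say on some $u_{i'}$, translated as Disprover setting $x_{i'}$ to False) the imagined game is one in which \emph{Disprover has moved first}. Prover's winning strategy is a winning strategy for the game in which Prover moves first, and it need not survive this parity shift: viewing POS-CNF as a Maker--Breaker game with Disprover as Maker, the No-Skip Lemma says precisely that skipping is never an \emph{advantage} for Prover --- it does not say that skipping is harmless, which is how you invoke it. Concretely, take the formula $F=(x_1)$ (padded with dummy variables to reach $n\ge 4$). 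Prover wins moving first by setting $x_1$ to True, yet in the associated bipartite graph the vertex $v_1$ has $u_1$ as its unique neighbour; if \Dom opens with $w_1$, \St answers $u_1$ and has already isolated $v_1$, so \Dom loses. Hence under your strategy the equivalence ``Prover wins $\Rightarrow$ \Dom wins'' fails.

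The fix is what the paper does: make the moves on $\{w_1,w_2\}$ purely \emph{reactive}. \Dom plays his Prover-mimicking moves on the $u_i$ exactly as in Theorem~\ref{thm:pspace_split}, and only plays $w_{3-k}$ in immediate response to \St playing $w_k$ (such an exchange corresponds to both players passing a round in the imagined game, which leaves it unaffected); if \St never touches $w_1,w_2$, then both remain available and \Dom claims one with a spare move once the $u_i$ are exhausted. Either way \Dom is guaranteed one of $w_1,w_2$ (since \St can never take the second one before \Dom answers), which totally dominates every $u_i$, while his Prover moves on the $u_i$ totally dominate $w_1$, $w_2$ and every $v_j$. Your instinct to use two vertices $w_1,w_2$ so that \Dom can always ``reply in kind'' is exactly right; you just must not spend a proactive move on them at the cost of Prover's tempo.
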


\begin{proof}
We are going to accordingly adopt the proof of Theorem~\ref{thm:pspace_split}. For this purpose, it is sufficient to only alter a little the previous construction and strategies. Instead of joining the vertices $u_i$ into a clique, they now form an independent set. We next add two new vertices, $w$ and $w'$, joined by an edge to every vertex $u_i$. An example of this construction is illustrated in Figure~\ref{fig:pspace}(b).

The resulting graph is bipartite and we can modify Dominator's strategy so that if Prover has a winning strategy for the POS-CNF game, then \Dom has a winning strategy for the MBTD game. If at some point \St plays on $w$ or $w'$, then \Dom answers by playing on the other vertex. Since $w$ and $w'$ are adjacent to all the vertices $u_i$, they will be totally dominated when \Dom plays on one of those vertices and all of the vertices $u_i$ will either be totally dominated by $w$ or $w'$. The situation of the vertices $v_j$ is similar to what it was in the previous proof. Staller's strategy in the case when Disprover wins on the POS-CNF game is the same as before, and she can ignore the case when \Dom plays on $w$ or $w'$ as she does when he plays on one of the vertices $v_j$. 
\end{proof}

\section{Concluding remarks}
\label{sec:concluding}

In this paper, we have introduced and studied the total version of the Maker-Breaker domination game. We close with the following open problems.

\begin{problem}
Characterize connected cubic graphs that are ${\cal D}$ and those that are ${\cal S}$.
\end{problem}

\begin{question}
Is it true that Dominator wins the D-game on an arbitrary $k$-regular graph, $k\ge 4$, if the girth of $G$ is small?
\end{question}

\begin{question}
Is it true that $P_{2k+1}\cp C_{2\ell +1}$ is ${\cal S}$ for every $k, \ell \ge 1$?
\end{question}

\begin{problem}
Characterize ${\cal D}$-minimal graphs. In particular, find additional families of ${\cal D}$-minimal graphs.
\end{problem}

\begin{problem}
Is there a graph of girth at least $5$ that is ${\cal D}$?
\end{problem}



\begin{thebibliography}{99}

\bibitem{beck-2008}
  J. Beck,
  Combinatorial Games. Tic-Tac-Toe Theory,
  Cambridge University Press, Cambridge, 2008.

\bibitem{bresar-2017}
  B.~Bre\v sar, M.A.~Henning,
  The game total domination problem is log-complete in PSPACE,
  Inform.\ Process.\ Lett.\ 126 (2017) 12--17.

\bibitem{brklra-2010}
  B.~Bre{\v{s}}ar, S.~Klav{\v{z}}ar, D.~F.~Rall,
  Domination game and an imagination strategy,
  SIAM J.\ Discrete Math.\ 24 (2010) 979--991.

\bibitem{bujtas-2015}
  Cs.~Bujt\'as,
  On the game domination number of graphs with given minimum degree,
  Electron.\ J. Combin.\ 22 (2015) \#P3.29.

\bibitem{bujtas-2018}
  Cs.~Bujt\'as,
  On the game total domination number,
  Graphs Combin.\ 34 (2018) 415--425.

\bibitem{bujtas-henning-tuza-2016}
  Cs.~Bujt\'as, M.A.~Henning, Zs.~Tuza,
  Transversal game on hypergraphs and the $\frac{3}{4}$-conjecture on the total domination game,
  SIAM J.\ Discrete Math.\ 30 (2016) 1830--1847.

\bibitem{CKTV15coupon}
  B.~Chen, J.~H.~Kim, M.~Tait, J.~Verstraete,
  On coupon colorings of graphs,
  Discrete Appl.\ Math.\ 193 (2015) 94--101.

\bibitem{CDH80total}
  E.~J.~Cockayne, R.~M.~Dawes, S.~T.~Hedetniemi,
  Total domination in graphs,
  Networks 10 (1980) 211--219.

\bibitem{DeHaHe17}
  W.~J.~Desormeaux, T.~W.~Haynes, M.~A.~Henning,
  Partitioning the vertices of a cubic graph into two total dominating sets,
  Discrete Appl.\ Math.\ 223 (2017) 52--63.

\bibitem{dorbec-2015}
  P.~Dorbec, G.~Ko\v smrlj, G.~Renault,
  The domination game played on unions of graphs,
  Discrete Math.\ 338 (2015) 71--79.

\bibitem{gledel-2018+}
  E.~Duch\^{e}ne, V.~Gledel, A.~Parreau, G.~Renault, 
  Maker-Breaker domination game, 
  https://arxiv.org/abs/1807.09479 (25 Jul 2018).

\bibitem{erdos-1973}
  P.~Erd\H{o}s, J.~L.~Selfridge, 
  On a combinatorial game,
  J.\ Combinatorial Theory Ser.\ A 14 (1973) 298--301.

\bibitem{GoHe18}
  W.~Goddard, M.~A.~Henning,
  Thoroughly dispersed colorings,
  J.\ Graph Theory 88 (2018) 174--191.

\bibitem{hefetz-2014}
  D.~Hefetz, M.~Krivelevich, M.~Stojakovi\'c, T.~Szab\'o,
  Positional Games,
  Birkh\"auser/Springer, Basel, 2014.

\bibitem{henning-2015}
  M.~Henning, S.~Klav\v{z}ar, D.~F.~Rall,
  Total version of the domination game,
  Graphs Combin.\ 31 (2015) 1453--1462.

\bibitem{combinatorica-2017}
  M.A.~Henning, S.~Klav\v{z}ar, D.F.~Rall,
  The 4/5 upper bound on the game total domination number,
  Combinatorica 37 (2017) 223--251.

\bibitem{hkr-2017+}
  M.~A.~Henning, S.~Klav{\v{z}}ar, D.~F.~Rall,
  Game total domination critical graphs,
  Discrete Appl.\ Math.\ 250 (2018) 28--37.

\bibitem{heki-2016}
  M.A.~Henning, W.B.~Kinnersley,
  Domination game: A proof of the $3/5$-conjecture for graphs with minimum degree at least two,
  SIAM J.\ Discrete Math.\ 30 (2016) 20--35.

\bibitem{MHAYbookTD}
  M.~A.~Henning, A.~Yeo,
  Total Domination in Graphs,
  Springer, New York, 2013.

\bibitem{kinnersley-2013}
  W.~B.~Kinnersley, D.~B.~West, R.~Zamani,
  Extremal problems for game domination number,
  SIAM J.\ Discrete Math.\ 27 (2013) 2090--2107.

\bibitem{NSS2016}
  M.J.~Nadjafi-Arani, M.~Siggers, H.~Soltani,
  Characterisation of forests with trivial game domination numbers,
  J.\ Comb.\ Optim.\ 32 (2016) 800--811.

\bibitem{schaefer-1978}
  T.~J.~Schaefer,
  On the complexity of some two-person perfect-information games,
  J.\ Comput.\ System Sci.\ 16 (1978) 185--225.

\bibitem{siegel-2013} 
  A.~N.~Siegel, 
  Combinatorial Game Theory, 
  San Francisco, CA, (2013).

\bibitem{ThYe07}
  S.~Thomass\'{e}, A.~Yeo,
  Total domination of graphs and small transversals of hypergraphs,
  Combinatorica 27 (2007) 473--487.

\bibitem{XLK-2018}
   K.~Xu, X.~Li, S.~Klav\v{z}ar,
   On graphs with largest possible game domination number,
   Discrete Math.\ 341 (2018) 1768--1777.

\bibitem{Ze89}
  B.~Zelinka,
  Total domatic number and degrees of vertices of a graph,
  Math.\ Slovaca 39 (1989) 7--11.

\end{thebibliography}
\end{document}